\title{Generalized Silver and Miller measurability}
\author{Giorgio Laguzzi}
\newtheorem{definition2}{Definition}
\newtheorem{fact}[definition2]{Fact}
\newtheorem{lemma}[definition2]{Lemma}
\newtheorem{claim}[definition2]{Claim}
\newtheorem{remark2}[definition2]{Remark}
\newtheorem*{Mobservation2}{Main Observation}
\newtheorem{example2}[definition2]{Example}
\newtheorem{corollary}[definition2]{Corollary}
\newenvironment{definition}{\begin{definition2} \upshape}{\end{definition2}}
\newenvironment{remark}{\begin{remark2} \upshape}{\end{remark2}}
\newcommand{\cohen}{\poset{C}}
\newcommand{\conc}{\smallfrown}
\newcommand{\club}{\textsc{Cub}}
\newcommand{\clubmiller}{\miller^\mathsf{club}}
\newcommand{\clubsilver}{\silver^\mathsf{club}}
\newcommand{\clubsacks}{\sacks^\mathsf{club}}
\newcommand{\DDelta}{\mathbf{\Delta}}
\newcommand{\dom}{\text{dom}}
\newcommand{\enfa}{\textit}
\newcommand{\ideal}{\mathcal}
\newcommand{\ifif}{\Leftrightarrow}
\newcommand{\force}{\Vdash}
\newcommand{\fullmiller}{\miller_\mathsf{full}}
\newcommand{\height}{\mathsf{ht}}
\newcommand{\level}{\textsc{Lev}}
\newcommand{\levy}{\mathsf{Coll}}
\newcommand{\miller}{\poset{M}}
\newcommand{\model}{\textsc}
\newcommand{\ns}{\textsc{NS}}
\newcommand{\On}{\text{On}}
\newcommand{\poset}{\mathbb}
\newcommand{\restric}{{\upharpoonright}}
\newcommand{\sacks}{\poset{S}}
\newcommand{\silver}{\poset{V}}
\newcommand{\SSigma}{\mathbf{\Sigma}}
\newcommand{\splitting}{\textsc{Split}}
\newcommand{\statsacks}{\sacks^\mathsf{stat}}
\newcommand{\statsilver}{\silver^\mathsf{stat}}
\newcommand{\statmiller}{\miller^\mathsf{stat}}
\newcommand{\stem}{\textsc{Stem}}
\newcommand{\successor}{\textsc{Succ}}
\newcommand{\term}{\textsc{Term}}
\begin{document}

\maketitle

\begin{abstract}  \footnote{\textbf{Acknowledgement.} The author is particularly indebted to Philipp Schlicht, since some of the ideas in this paper were inspirired by his seminar given in Freiburg during January 2013. Further, the author is grateful to Luca Motto Ros, for a detailed explanation of the differences and analogies between the Miller measurability and Hurewicz dichotomy, which can be found in Remark \ref{final-remark} of the present paper. Finally, the author thanks the referee for his/her useful suggestions.}
We present some results about the burgeoning research area concerning set theory of the ``$\kappa$-reals''. We focus on some notions of measurability coming from generalizations of Silver and Miller trees.
We present analogies and mostly differences from the classical setting.
\end{abstract}

\section{Introduction and basic definitions} \label{section:introduction}
The study of the generalized version of the Baire space $\kappa^\kappa$ and Cantor space $2^\kappa$, for $\kappa$ uncountable regular cardinal, is a burgeoning research area, which intersects both the generalized descriptive set theory and the set theory of the ``$\kappa$-reals'', where we refer to the elements of $\kappa^\kappa$ and $2^\kappa$ as $\kappa$-reals.

\textsf{Basic Notation.} The \emph{dramatis personae} of our work are the so-called tree-like forcings.
A tree $T$ is a subset of either $2^{<\kappa}$ or $\kappa^{<\kappa}$, closed under initial segments. $\stem(T)$ denotes the longest node of $T$ compatible with all the other nodes of $T$; $\successor(t,T):=\{\xi <\kappa: t^\conc \xi \in T \}$; \splitting(T) is the set of splitting nodes of $T$; we put $\height(T):= \sup \{\alpha: \exists t \in T (|t|=\alpha) \}$, while $\term(T)$ denotes the \enfa{terminal} nodes of $T$. For $\alpha < \kappa$,  $T\restric \alpha:= \{ t \in T: |t| < \alpha \}$. A \emph{branch} through $T$ is the limit of an increasing cofinal sequence $\{ t_\xi: \xi < \kappa \}$ of nodes in $T$, and $[T]$ will denote the set of all branches of $T$.
Moreover we will assume $\kappa^{<\kappa}=\kappa$ and that $\kappa$ is regular. Note that we will use the usual letters for denoting forcing notions like Sacks, Silver, Miller and Cohen, omitting the symbol $\kappa$, as it is clear that, in this paper, we will always deal with some generalized version.

Our attention will be particularly focused on the following types of trees:
\begin{itemize}
\item $T \subseteq 2^{<\kappa}$ is called \emph{Sacks} iff $\forall t \in T \exists t' \in T (t \subseteq t' \land t' \in \splitting(T))$ (we write $T \in \sacks$);
\item $T \subseteq 2^{<\kappa}$ is \emph{club Sacks} iff it is Sacks and for every $x \in [S]$ we have $\{ \alpha < \kappa: x \restric \alpha \in \splitting(T) \}$ is closed unbounded (we write $T \in \clubsacks$); analogously we define $\statsacks$ by requiring $\{ \alpha < \kappa: x \restric \alpha \in \splitting(T) \}$ to be stationary;
\item $T \subseteq 2^{<\kappa}$ is \emph{Silver} iff it is Sacks and moreover for every $s,t \in T$ such that $|s|=|t|$ one has $s^\conc i \ifif t^\conc i$, for $i \in \{ 0,1 \}$ (we write $T \in \silver$);
\item $T \subseteq 2^{<\kappa}$ is \emph{club Silver} iff it is Silver and $\level(T):= \{\alpha < \kappa: \exists t \in T (t \in \splitting(T))  \}$ is closed unbounded (we write $T \in \clubsilver$); analogously for $\statsilver$;
\item $T \subseteq \kappa^{<\kappa}$ is called \emph{Miller} iff $\forall t \in T \exists t' \in T (t \subseteq t' \land t' \in \splitting(T) \land |\successor(t,T)|=\kappa)$ (we write $T \in \miller$);
\item $T \subseteq \kappa^{<\kappa}$ is called \emph{club Miller} ($T \in \clubmiller$) iff it is Miller and the following hold:
\begin{itemize}
\item for every $x \in [T]$ one has $\{ \alpha < \kappa: x \restric \alpha \in \splitting(T) \}$ is closed unbounded,
\item for every $t \in \splitting(T)$ one has $\{ \alpha < \kappa: t^\conc \alpha \in T \}$ is closed unbounded.
\end{itemize}
\item $T \subseteq \kappa^{<\kappa}$ is called \emph{full Miller} ($T \in \fullmiller$) iff it is Miller and for every $t \in \splitting(T)$ for every $\alpha < \kappa$, one has  $t^\conc \alpha \in T $.
\end{itemize}
The associated forcing notions are ordered by inclusion.
We remark that a stronger version of $\clubmiller$ has been introduced by Friedman and Zdomskyy in \cite{FZ10}, where they proved that such a version, combined with club Sacks, preserves $\kappa^+$ via $<\kappa$-support iteration.

\

\noindent 

For our tree-forcings $\poset{P}$, one can introduce a corresponding notion of regularity as follows. 

\begin{definition}
A set $X$ of $\kappa$-reals is said to be:
\begin{itemize}
\item[-] $\poset{P}$-\enfa{measurable} iff 
\[
\forall T \in \poset{P} \exists T' \in \poset{P}, T' \subseteq T ([T'] \subseteq X \vee [T'] \cap X = \emptyset).
\]
\item[-] \enfa{weakly}-$\poset{P}$-\enfa{measurable} iff 
\[
\exists T \in \poset{P} ([T] \subseteq X \vee [T] \cap X = \emptyset).
\]
\end{itemize}
When $\poset{P}$ is one of our tree forcings, we may also use the notation ``Sacks measurable", ``Miller measurable" and so on. 
\end{definition}
In \cite{FKK14}, the authors show that the following generalization from the classical setting holds true: if $\Theta$ is a family of sets of $\kappa$-reals closed under intersection with closed sets and continuous pre-images, then
\[
\forall X \in \Theta (X \text{ is weakly-$\poset{P}$-measurable}) \Leftrightarrow  \forall X \in \Theta (X \text{ is $\poset{P}$-measurable}).  
\] 
Hence, when one investigates the validity of $\poset{P}$-measurability for all sets in $\Theta$, it is actually sufficient to investigate the weak-$\poset{P}$-measurability on $\Theta$.

\

There are essentially two main reasons for which the investigation of regularity properties in $\kappa^\kappa$ is interesting and more involved than the classical setting:
\begin{enumerate} 
\item the club filter is a $\SSigma^1_1$ set without the Baire property, as was proved by Halko and Shelah in \cite{HS01};
\item there is not an analogue of the factoring lemma for the Levy collapse $\levy(\kappa, <\lambda)$, for $\kappa> \omega$ and $\lambda$ inaccessible. More precisely, there are $x \in \kappa^\kappa$ such that $\levy(\kappa,<\lambda) / x$ is not equivalent to $\levy(\kappa,<\lambda)$.
\end{enumerate} 
We will call such reals \enfa{bad}, while on the opposite side, the \enfa{good reals} will be those having quotients equivalent to the Levy collapse.

Actually 2 is true even for the $\kappa$-Cohen forcing $\cohen$, as it is well-know that one can pick a Cohen $\kappa$-real $x$ and then a forcing $P$ shooting a club through the complement of $x$, and  this two step iteration is equivalent to $\cohen$. Hence, both $\cohen$ and $\levy(\kappa,<\lambda)$ are not strongly homogeneous, unlike their counterparts in the standard setting. 

Nevertheless, even if one cannot hope for a full factoring lemma, in \cite{Sc13} Philipp Schlicht has shown that one can recover a partial version. Indeed, he has proven that when forcing with $\levy(\kappa,<\lambda)$, one can obtain perfectly many good reals, in a sense, in order to use the usual Solovay's argument and obtain that all $\On^\kappa$-definable subsets of $2^\kappa$ have the perfect set property.  Inspired by his method, we prove some variants that will allow us to get the following two results:
\begin{itemize}
\item[($\star$)] for $\kappa$ inaccessible, a $\kappa^+$-iteration of $\cohen$ with $<\kappa$-support forces all $\On^\kappa$-definable sets to be  $\statsilver$-measurable;
\item[($\star \star$)] for $\lambda$ inaccessible, $\levy(\kappa, <\lambda)$ forces all $\On^\kappa$-definable sets to be $\fullmiller$- measurable;
\end{itemize}
Furthermore, we will also prove that ($\star$) is no longer true when one replaces $\statsilver$ with $\clubsilver$.  
We conclude this introductory section with a schema of the paper: in section \ref{section:preliminary} we show some interesting construction involving Sacks trees and Miller trees, marking some difference from the standard setting; in section \ref{perfect-cohen} we present some results concerning adding perfect trees of Cohen branches; in section \ref{silver} we build the model to get all $\On^\kappa$-definable subsets of $2^\kappa$ to be $\statsilver$-measurable; in section \ref{miller}, we prove that $\levy(\kappa, <\lambda)$ forces all $\On^\kappa$-definable subsets of $\kappa^\kappa$ to be $\fullmiller$-measurable; a concluding section is finally devoted to discussing some further potential developments.

\section{Some basic differences from the classical setting} \label{section:preliminary}
This section may be read independently from the rest of the paper. It is devoted to analyzing some basic differences from the standard setting. Throughout this section, we assume that $\kappa$ is a regular successor. Let $\Gamma := \{\lambda_\alpha:\alpha<\kappa\}$ be such that $\lambda_0=0$ and $\{\lambda_\alpha: 1 \leq \alpha < \kappa \}$ enumerates the limit ordinals $<\kappa$ such that $2^{\lambda_\alpha}=\kappa$. For $t \in 2^{\lambda_\alpha}$ and $\alpha<\kappa$, let
$\pi(t,\alpha+1):= \{t' \in 2^{\lambda_{\alpha+1}}: t' \supseteq t  \}$.
Furthermore, fix a well-ordering $W(t,\alpha+1)= \{ t^{\alpha+1}_\xi: \xi < \kappa   \}$ of $\pi(t,\alpha+1)$.

In the standard case when $\kappa=\omega$, we know that $2^\omega$ and $\omega^\omega$ are not homeomorphic, even if they are connected via a Borel isomorphism. The following simple remark shows that the situation is different, when $\kappa> \omega$ is a successor. The following result was proved in \cite{HN73}. We give a sketch of the proof, since the construction is needed later. 
\begin{remark}
$2^\kappa$ and $\kappa^\kappa$ are homeomorphic. Moreover, there are \emph{many} such homeomorphisms.
Indeed, let $f:\kappa^{<\kappa} \rightarrow 2^{<\kappa}$ be defined recursively as follows:
\begin{itemize}
\item[-] $f(\emptyset)=\emptyset$
\item[-] $f(\langle \xi \rangle)$ is the $\xi$th element of the well-ordering $W(\emptyset,1)$
\item[-] if $t \in \kappa^{\alpha}$ and $\xi<\kappa$, $f(t^\conc\xi)$ is the $\xi$th element in the well-ordering $W(f(t),\alpha+1)$.
\item[-] given $\{t_\alpha: \alpha <\gamma\}$ increasing sequence, $\gamma$ limit ordinal, put $f(\lim_{\alpha<\gamma}t_\alpha)=\lim_{\alpha<\gamma} f(t_\alpha)$.
\end{itemize}
Notice that the range of $f$ is not $2^{<\kappa}$, but it is a strict subset of it, namely $\bigcup \{t \in 2^{\lambda_\alpha}:\lambda_\alpha \in \Gamma \}$. This $f$ provides a bijection $h:\kappa^\kappa \rightarrow 2^\kappa$ in the natural way, that is $h(x)=\lim_{\alpha<\kappa}f(x \restric \alpha)$, for every $x \in \kappa^\kappa$. It easily follows from the definition that $h$ is a homeomorphism.
\end{remark}

The homeomorphism obviously depends on the well-orderings of $W(t,\alpha)$, and so it is far from being uniquely determined.
We now want to use these homeomorphisms between $\kappa^\kappa$ and $2^\kappa$ to exhibit some particular situations which do not occur in the standard setting.

\begin{fact} \label{fact}
For every club Miller tree $T \subseteq \kappa^{<\kappa}$ with the property that for every $t \in T$, $|\{\xi: t^\conc \xi \notin T  \}|=\kappa$, there exists a homeomorphism $h$ such that $h''[T]$ does not contain the branches of a club Sacks tree. 
\end{fact}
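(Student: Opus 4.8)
The plan is to exploit the freedom in choosing the well-orderings $W(t,\alpha+1)$ that determine $f$ (hence $h$), together with the hypothesis that every node of $T$ omits $\kappa$-many immediate successors. Since $h$ is a homeomorphism and $[T]$ is closed, $h''[T]$ is closed, so $h''[T]=[S^*]$ for the pruned tree $S^*:=\{s\in 2^{<\kappa}:\exists z\in h''[T]\,(s\subseteq z)\}$. Recall that $x\in[T]$ is sent to $h(x)=\lim_\alpha f(x\restric\alpha)$ with $f(x\restric\alpha)\in 2^{\lambda_\alpha}$, so every branch of $[S^*]$ passes, at each \emph{checkpoint} level $\lambda_\alpha$, through a node of the form $f(t)$ with $t\in T\cap\kappa^\alpha$, and all genuine branching of $S^*$ lives inside the blocks $[\lambda_\alpha,\lambda_{\alpha+1})$.

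First I would arrange the homeomorphism. Fix $t\in T$ with $|t|=\alpha$; the successors of $f(t)$ that survive in $S^*$ are exactly the nodes $f(t^\conc\xi)$ with $t^\conc\xi\in T$, and $f(t^\conc\xi)$ is by definition the $\xi$-th element of $W(f(t),\alpha+1)$. Put $A_i:=\{t'\in\pi(f(t),\alpha+1):t'(\lambda_\alpha)=i\}$ for $i\in\{0,1\}$; since $2^{\lambda_{\alpha+1}}=\kappa$, both $A_0$ and $A_1$ have size $\kappa$. Using that $|\{\xi:t^\conc\xi\in T\}|\le\kappa$ and, crucially, that $|\{\xi:t^\conc\xi\notin T\}|=\kappa$, I can enumerate $\pi(f(t),\alpha+1)$ as $W(f(t),\alpha+1)$ so that every index $\xi$ with $t^\conc\xi\in T$ is assigned an element of $A_0$, while the $\kappa$-many omitted indices absorb all of $A_1$ (together with whatever remains of $A_0$). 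Performing this independently at every $t\in T$ yields an $h$ for which all surviving successors of $f(t)$ agree on the bit in position $\lambda_\alpha$; consequently $f(t)^\conc 1\notin S^*$, i.e. \emph{every} checkpoint node $f(t)$ is non-splitting in $S^*$.

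Next I would conclude by contradiction. Suppose $[S]\subseteq h''[T]=[S^*]$ with $S$ club Sacks; as club Sacks trees are pruned, $S\subseteq S^*$ and every splitting node of $S$ is a splitting node of $S^*$. Fix $y\in[S]$, set $x:=h^{-1}(y)\in[T]$, and let $C_y$ be the club of splitting levels of $S$ along $y$. A short computation shows that $S^*$ can split at a level $\beta\in[\lambda_\alpha,\lambda_{\alpha+1})$ only if $x\restric\alpha\in\splitting(T)$: two branches of $h''[T]$ through $y\restric\beta$ agree up to $f(x\restric\alpha)$ and diverge inside the block, hence arise from two distinct successors of $x\restric\alpha$ in $T$. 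Since $T$ is Miller, the $T$-splitting levels of $x$ are cofinal, so $D':=\{\alpha:x\restric\alpha\in\splitting(T)\text{ and }C_y\cap[\lambda_\alpha,\lambda_{\alpha+1})\neq\emptyset\}$ is unbounded in $\kappa$. Picking a limit point $\alpha^*$ of $D'$ and using the continuity of $\alpha\mapsto\lambda_\alpha$, the set $C_y\cap\lambda_{\alpha^*}$ is unbounded in $\lambda_{\alpha^*}$; as $C_y$ is closed, $\lambda_{\alpha^*}\in C_y$, so $y\restric\lambda_{\alpha^*}=f(x\restric\alpha^*)$ is a splitting node of $S$, hence of $S^*$ — contradicting the previous paragraph, since $f(x\restric\alpha^*)$ is a checkpoint node.

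I expect the main obstacle to be the first paragraph, namely checking that the cardinality bookkeeping in the well-orderings can be carried out \emph{uniformly} at all $t\in T$: this is exactly where the hypothesis $|\{\xi:t^\conc\xi\notin T\}|=\kappa$ is indispensable, since without $\kappa$-many free indices one could not both funnel all surviving successors into $A_0$ and still exhaust $\pi(f(t),\alpha+1)$. The closedness argument of the last paragraph is routine reflection, the only point needing care being the continuity of $\alpha\mapsto\lambda_\alpha$ at limits; this holds because the set of limit $\beta<\kappa$ with $2^\beta=\kappa$ is closed (if $\beta_i\nearrow\beta$ then $2^\beta\le\kappa^{<\kappa}=\kappa$ and $2^\beta\ge 2^{\beta_i}=\kappa$), so $\Gamma$ is club and its enumeration is continuous.
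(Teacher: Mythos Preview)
Your proposal is correct and follows the same approach as the paper: arrange the well-orderings so that all $T$-successors of each node $t$ are funneled to one side (you use the $0$-side, the paper uses the $1$-side), making every checkpoint node $f(t)$ non-splitting in $S^*$, and then derive a contradiction from the fact that the club of splitting levels of a putative club Sacks subtree must meet the club $\Gamma$ of checkpoint levels. Your write-up is considerably more detailed than the paper's terse argument, and your detour through the set $D'$ is unnecessary --- once you know $\Gamma$ is club you can simply intersect $C_y$ with $\Gamma$ directly --- but the strategy is identical.
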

\begin{proof}
Let $T \subseteq \kappa^{<\kappa}$ be a club Miller tree. Instead of $h$, we actually define the function $f:\kappa^{<\kappa} \rightarrow 2^{<\kappa}$, from which we will naturally obtain the desired $h$. For every club splitting node $t \in T$ we define $f$ satisfying the following requirement: let $C_t \subseteq \kappa $ denote the club set of successors of $t$,  then
$\xi \in C_t \Rightarrow f(t^\conc \xi) \supseteq f(t)^\conc 1$.  
It is then clear that, for every limit ordinal $\alpha$ and every $t \in f''T$ with length $\lambda_\alpha$, we get that $f(t)$ cannot be a splitting node. Hence, $f''T$ cannot contain a club Sacks subtree.

\end{proof}
\begin{lemma} \label{bad lemma}
There exist a homeomorphism $h: \kappa^\kappa \rightarrow 2^\kappa$ and $Y \subseteq \kappa^{\kappa}$ such that $Y$ is weakly club Miller measurable but $h''Y$ is not weakly club Sacks measurable. 
\end{lemma}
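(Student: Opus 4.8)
The plan is to let $Y$ contain the branches of the very club Miller tree produced by Fact \ref{fact}, and then to ``thicken'' $Y$ on the Sacks side into a Bernstein-type set by a recursion of length $2^\kappa$. First I would fix a club Miller tree $T \subseteq \kappa^{<\kappa}$ with the property that $|\{\xi : t^\conc \xi \notin T\}| = \kappa$ for every $t \in T$ (such a $T$ is easy to build: let every node branch exactly into the even ordinals, so that the successor sets are club of size $\kappa$ while the complements also have size $\kappa$), and apply Fact \ref{fact} to obtain a homeomorphism $h$ with $[S'] \not\subseteq h''[T]$ for every $S' \in \clubsacks$. Any $Y$ with $[T] \subseteq Y$ is then automatically weakly club Miller measurable, witnessed by $T$ itself, so the whole burden is to arrange $h''Y$ so that it fails weak club Sacks measurability, i.e. so that for every $S \in \clubsacks$ both $[S] \cap h''Y \neq \emptyset$ and $[S] \setminus h''Y \neq \emptyset$.

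The key preparatory observation is that $h''[T]$ not only fails to contain the branches of any club Sacks tree, but is ``small'' along each of them: for every $S \in \clubsacks$ one has $|[S] \setminus h''[T]| = 2^\kappa$. Indeed, $h''[T]$ is closed, being the homeomorphic image of the closed set $[T]$, and by Fact \ref{fact} there is some $x \in [S] \setminus h''[T]$. Since the complement of $h''[T]$ is open, I can choose $\beta < \kappa$ with $[x \restric \beta] \cap h''[T] = \emptyset$; the subtree of $S$ above $x \restric \beta$ is again a club Sacks tree, all of whose $2^\kappa$ branches avoid $h''[T]$. I also record the bookkeeping facts that $|2^{<\kappa}| = \kappa$, so there are at most $2^\kappa$ club Sacks trees, and that each club Sacks tree has exactly $2^\kappa$ branches.

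With these cardinal estimates in hand I would run the Bernstein recursion. Enumerate $\clubsacks$ as $\{S_\alpha : \alpha < 2^\kappa\}$ and build increasing sets $A_\alpha$ of points to be put \emph{into} $h''Y$ and $D_\alpha$ of points to be kept \emph{out}, each of size $< 2^\kappa$. At stage $\alpha$, if $[S_\alpha] \cap (h''[T] \cup A_\alpha) = \emptyset$, pick an ``in'' point $a_\alpha \in [S_\alpha] \setminus D_\alpha$, which exists since $|[S_\alpha]| = 2^\kappa > |D_\alpha|$; then pick an ``out'' point $d_\alpha \in [S_\alpha] \setminus (h''[T] \cup A_{\alpha+1})$, which exists precisely because $|[S_\alpha] \setminus h''[T]| = 2^\kappa$ while $|A_{\alpha+1}| < 2^\kappa$. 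Finally set $B := h''[T] \cup \bigcup_{\alpha < 2^\kappa} A_\alpha$ and $Y := h^{-1}(B)$, so that $h''Y = B$ and $[T] \subseteq Y$. Since in-points are always chosen outside the growing set $D$ and out-points outside the already-committed in-points, the union of the $A_\alpha$ is disjoint from $\{d_\alpha : \alpha < 2^\kappa\}$; hence for each $\alpha$ one has $[S_\alpha] \cap h''Y \neq \emptyset$ and $d_\alpha \in [S_\alpha] \setminus h''Y$. Thus $h''Y$ is a Bernstein set relative to $\{[S] : S \in \clubsacks\}$ and is not weakly club Sacks measurable, while $[T] \subseteq Y$ keeps $Y$ weakly club Miller measurable.

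The main obstacle is the size estimate of the second paragraph: everything hinges on upgrading the bare non-inclusion $[S] \not\subseteq h''[T]$ supplied by Fact \ref{fact} to the assertion that a full $2^\kappa$ branches of $[S]$ lie outside $h''[T]$, since this is exactly what sustains the reservation of out-points across a recursion of length $2^\kappa$. This step is where the Miller--Sacks asymmetry encoded in $h$ is genuinely used; the recursion itself is then a routine transfinite Bernstein argument.
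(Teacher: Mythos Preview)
Your proposal is correct and follows essentially the same strategy as the paper: start with $Y_0=[T]$ for the Miller tree $T$ from Fact~\ref{fact}, establish that every club Sacks set $[S]$ has $2^\kappa$ branches outside $h''[T]$, and then run a Bernstein recursion of length $2^\kappa$ to ensure each $[S]$ meets and avoids $h''Y$. Your topological justification of the size estimate (using that $h''[T]$ is closed, passing to a basic open neighbourhood disjoint from it, and restricting $S$) is a clean variant of the paper's more combinatorial argument, and your recursion is stated a bit more carefully in that you explicitly keep the in-points out of the growing set of out-points, which is needed for the final disjointness claim.
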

\begin{proof}
Consider $f$, $h$ and $T$ as in Fact \ref{fact}. Note that, for every club Sacks tree $S$, $[S] \setminus h''[T]$ has cardinality $2^\kappa$. This follows easily from Fact \ref{fact}, since there is $\alpha \in \kappa$ and $t \in 2^{\lambda_\alpha}$ such that $t^\conc 0 \in S \land f^{-1}(t^\conc 0) \notin T$ (actually there are cofinally many such $\alpha$'s). 

Let $\{S_\xi : \xi < 2^\kappa \}$ be an enumeration of all club Sacks trees.
Now we construct $\{ Y_\xi: \xi < 2^\kappa \}$ and $\{ Z_\xi: \xi < 2^\kappa \}$ recursively as follows:
\begin{itemize}
\item[-] \textsf{Step} $-1$: $Y_0=[T]$ and $Z_0=\emptyset$;
\item[-] \textsf{Step} $\xi$ successor or $\xi=0$: pick $y_{\xi} \in {h^{-1}}[S_{\xi}] \setminus \bigcup_{\iota \leq \xi} Y_\iota$ and put $Y_{\xi+1}=Y_{\xi} \cup \{ y_{\xi} \}$. Then pick $z_{\xi} \in {h^{-1}}[S_{\xi}] \setminus \bigcup_{\iota \leq \xi} Z_{\iota}$ such that $z_{\xi} \notin Y_{\xi+1}$, and put $Z_{\xi+1}=Z_{\xi} \cup \{ z_{\xi} \}$. Note that the choice of $y_\xi$ can be done, since by Fact \ref{fact} any club Sacks set contains $2^\kappa$ many branches which are not in $h''[T]$.  
\item[-] \textsf{Step} $\xi$ limit: put $Y_\xi= \bigcup_{\iota < \xi} Y_\iota$ and $Z_\xi= \bigcup_{\iota < \xi} Z_\iota$.
\end{itemize}
Finally put $Y= \bigcup_{\xi < 2^\kappa} Y_\xi$ and $Z= \bigcup_{\xi < 2^\kappa} Z_\xi$. Then for all club Sacks tree $S$ both $h''Y \cap [S] \neq \emptyset$ and $[S] \nsubseteq h''Y$ (the latter because $Z \cap Y = \emptyset$.)
\end{proof}

On the opposite side, we have the following.
\begin{lemma} \label{good lemma}
Assume $f: \kappa^{<\kappa} \rightarrow 2^{<\kappa}$ is a map as above and satisfying the following further property: for every $\alpha < \kappa$ and every $t \in 2^{\lambda_\alpha}$, 
\[
\emph{($\dagger$)} \quad {f^{-1}}\{ t' \in 2^{\lambda_{\alpha+1}}: t^\conc 0 \subset t'  \}, {f^{-1}}\{ t' \in 2^{\lambda_{\alpha+1}}: t^\conc 1 \subset t'  \} \text{ are stationary}.
\]
Then for every club Miller tree $T$ we have that $f'' T$ contains a club Sacks tree. 
\end{lemma}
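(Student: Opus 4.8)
The plan is to build, by recursion on the full binary tree $2^{<\kappa}$, a tree-embedding $\sigma \mapsto u_\sigma$ of $2^{<\kappa}$ into $T$, arranged so that the downward closure $S$ of $\{ f(u_\sigma) : \sigma \in 2^{<\kappa} \}$ is the club Sacks tree we want. I will maintain throughout that each $u_\sigma \in \splitting(T)$, that $\sigma \subseteq \tau$ implies $u_\sigma \subseteq u_\tau$ while $\sigma \perp \tau$ implies $u_\sigma \perp u_\tau$, and --- the crucial point --- that $f(u_{\sigma\conc i}) \supseteq f(u_\sigma)\conc i$ for $i \in \{0,1\}$, so that each $f(u_\sigma)$ becomes a splitting node of $S$. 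For the base, take $u_\emptyset$ to be any splitting node extending $\stem(T)$.

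For the successor step, given $u_\sigma \in \splitting(T)$, the set $C_\sigma := \{ \xi : u_\sigma\conc\xi \in T \}$ is club by the second clause of the definition of $\clubmiller$, while by $(\dagger)$ applied to $t = f(u_\sigma)$ the sets $A_i := \{ \xi : f(u_\sigma\conc\xi) \supseteq f(u_\sigma)\conc i \}$ are stationary. (Here one uses that, since $f$ is injective and maps $\kappa^{\alpha+1}$ bijectively onto $2^{\lambda_{\alpha+1}}$, the preimage ${f^{-1}}\{ t' : f(u_\sigma)\conc i \subset t' \}$ is exactly $\{ u_\sigma\conc\xi : \xi \in A_i \}$.) Hence $A_i \cap C_\sigma \neq \emptyset$; I pick $\xi_i$ in it and extend $u_\sigma\conc\xi_i$ to a splitting node $u_{\sigma\conc i} \in \splitting(T)$, which is possible because $T$ is Miller. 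As $\xi_0 \neq \xi_1$ this secures incompatibility, and $f(u_{\sigma\conc i}) \supseteq f(u_\sigma)\conc i$ secures the splitting requirement. At a limit stage I set $u_\sigma := \bigcup_{\beta} u_{\sigma\restric\beta}$.

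Once the embedding is built, the verification runs as follows. The splitting nodes of $S$ are exactly the $f(u_\sigma)$, since between $f(u_\sigma)$ and each $f(u_{\sigma\conc i})$ the tree $S$ is linear; every node of $S$ extends to some such splitting node, so $S$ is Sacks. Each branch of $S$ has the form $h(x)$ with $x = \bigcup_\eta u_{y\restric\eta}$ for some $y \in 2^\kappa$: indeed the $u_{y\restric\eta}$ lie in $T$ with lengths cofinal in $\kappa$, so $x \in [T]$, and $\bigcup_\eta f(u_{y\restric\eta}) = \bigcup_\alpha f(x\restric\alpha) = h(x)$, giving $[S] \subseteq h''[T]$. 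Finally, along the branch $h(x)$ the splitting levels are precisely $\{ \lambda_{|u_{y\restric\eta}|} : \eta < \kappa \}$; these are unbounded because the lengths $|u_\sigma|$ increase cofinally, and they are closed because $\Gamma$ is continuous (this continuity is forced by the limit clause of $f$, which requires $\lambda_\gamma = \sup_{\alpha<\gamma}\lambda_\alpha$), so a supremum of such levels is again of the form $\lambda_{|u_{y\restric\eta}|}$ and carries the splitting node $f(u_{y\restric\eta})$.

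The main obstacle is exactly the limit stage: I must show that $u_\sigma = \bigcup_\beta u_{\sigma\restric\beta}$ is again a \emph{splitting} node of $T$, not merely a node, for otherwise the splitting levels of $S$ need not be closed and $S$ would be only perfect rather than club Sacks. I will obtain this from the closedness of club Miller trees together with the first clause of the definition of $\clubmiller$: the union $u_\sigma$ lies in $T$ (as $T$ is closed under increasing ${<}\kappa$-unions), and extending it to a branch $x \in [T]$ one sees that all levels $|u_{\sigma\restric\beta}|$ belong to the club $\{ \alpha : x\restric\alpha \in \splitting(T) \}$, whose closure contains their supremum $|u_\sigma|$; hence $u_\sigma = x\restric|u_\sigma| \in \splitting(T)$. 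This single point is simultaneously the heart of the construction and the precise reason the resulting Sacks tree comes out club rather than merely perfect.
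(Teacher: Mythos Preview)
Your argument is correct and follows essentially the same route as the paper: both build an embedding $\sigma\mapsto t_\sigma$ (your $u_\sigma$) of $2^{<\kappa}$ into the splitting nodes of $T$, using $(\dagger)$ together with the clubness of $\successor(t_\sigma,T)$ at successor steps and taking unions at limits, and then set $S$ to be the image under $f$. The paper states this more tersely (it formulates the target as finding $T_0\in\sacks^{*,\textsf{club}}$ with $f''T_0=S\in\clubsacks$) and omits the verification that limits of splitting nodes are again splitting and that the resulting splitting levels form a club; your write-up makes both of these points explicit, which is an improvement in clarity rather than a difference in method.
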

\begin{proof}
Indeed, we are going to prove the following stronger conclusion: let $\sacks^{*,\textsf{club}}$ be the version of club Sacks forcing obtained by replacing $2^{<\kappa}$ with $\kappa^{<\kappa}$, i.e., $\sacks^{*,\textsf{club}}$ consists of 2-branching trees in $\kappa^{<\kappa}$ with club splitting. Then we are going to prove that for every $T \in \clubmiller$ there exists $T_0 \subset T$ in $\sacks^{*,\textsf{club}}$ and $S \in \clubsacks$ such that $f''T_0 = S$. We recursively construct $T_0$ as follows.

\textsf{Step 0.} Let $t_\emptyset= \stem(T)$ and put $\alpha_\emptyset = |t_\emptyset|$. Then pick $\xi_0, \xi_1 \in \successor(t_\emptyset)$ such that $f({t_\emptyset}^\conc \xi_0) \supset f(t_\emptyset)^\conc 0$ and $f({t_\emptyset}^\conc \xi_1) \supset f(t_\emptyset)^\conc 1$; note that this can be done by ($\dagger$), since $\successor(t_\emptyset, T)$ is club. Further, for $i \in \{ 0,1 \}$, let $t_{\langle i \rangle}$ be the least splitting node in $T_{t_\emptyset^\conc i}:= \{s \in T: s \subseteq {t_\emptyset}^\conc i \vee s \supseteq {t_\emptyset}^\conc i  \}$. 

\textsf{Successor step.}  Assume the construction done for all $\sigma \in 2^{\beta}$. For every $\sigma \in 2^\beta$, we use the same idea as step 0, and we pick $\xi_0, \xi_1 \in \successor({t_{\sigma}})$ such that, for $i \in \{ 0,1 \}$, $f({t_\sigma}^\conc \xi_i) \supset f(t_\sigma)^\conc i$. Analogously, $t_{\sigma^\conc i}$ is the least splitting node in $T_{t_\sigma^\conc i}$.

\textsf{Limit step.} For $\sigma \in 2^{\delta}$ such that, for all $\beta < \delta$, $t_{\sigma \restric \beta}$ is already constructed, put $t_{\sigma}:= \bigcup_{\beta < \delta} t_{\sigma \restric \beta}$.

Finally let $T_0$ be the downward closure of $\bigcup_{\sigma \in 2^{<\kappa}} t_{\sigma}$ and $S:= f''T_0$. By construction, $T_0$ and $S$ have the required properties.
\end{proof}
\begin{remark}
In a sense, the situation occurring in Lemma \ref{bad lemma} is very unpleasant, as we would like to generally view Miller trees as particular kind of Sacks trees, and moreover that this fact is preserved under homeomorphism. Hence, Lemma \ref{bad lemma} and \ref{good lemma} may be understood as a way of separating good homeomorphisms from bad ones.
\end{remark}

\section{Generic trees of Cohen branches} \label{perfect-cohen}
We present some results about adding certain types of generic perfect trees. In section \ref{silver} and \ref{miller}, it will be crucial to use specific kinds of perfect trees such that each of their branches is Cohen over the ground model. We refer to such generic trees by saying ``perfect trees of Cohen branches".

\begin{lemma} \label{lemma:generic-silver}
Let $\kappa$ be inaccessible. Let $\mathbb{VT}:= \{ p: \exists T \in \silver \exists \alpha \in \kappa (p= T \restric \alpha)  \}$, ordered by end-extension, i.e., $p' \leq p$ iff $p \subseteq p' \land \forall t \in p' \setminus p \exists s \in \term(p)(s \subseteq t)$. Let $T_G:= \bigcup \{p: p \in G \}$, with $G$ being $\mathbb{VT}$-generic filter over the ground model $\model{N}$. Then
\[
\begin{split}
\model{N}[G]  \models & \quad  T_G \in \silver \land \forall x \in [T_G] (x \text{ is Cohen over } \model{N}) \\
			 &	\quad \land \level(T_G) \text{ is stationary and co-stationary},
\end{split}
\]
where $\level(T_G)$ denotes the set of splitting levels of $T_G$.
Moreover, $\mathbb{VT}$ is a forcing of size $\kappa$ and is $< \kappa$-closed. So it is actually equivalent to $\kappa$-Cohen forcing. 
\end{lemma}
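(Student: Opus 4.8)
The plan is to separate the two halves of the statement: the structural facts about the poset $\mathbb{VT}$ (size $\kappa$, $<\kappa$-closure, and hence equivalence to $\cohen$), and the three genericity assertions about $T_G$ (that it is Silver, that all of its branches are Cohen, and that $\level(T_G)$ is stationary and co-stationary). I would establish the poset facts first, since the closure is used throughout. For the size, each condition is a subset of some $2^{<\alpha}$ with $\alpha<\kappa$, and since $\kappa$ is inaccessible we have $2^\beta<\kappa$ for all $\beta<\kappa$, whence $|2^{<\alpha}|<\kappa$ and the number of candidate initial segments at each level is $<\kappa$; summing over $\alpha<\kappa$ and using regularity gives $|\mathbb{VT}|=\kappa$. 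For $<\kappa$-closure, given a decreasing chain $\langle p_\eta : \eta<\gamma\rangle$ with $\gamma<\kappa$, I set $\alpha=\sup_\eta\height(p_\eta)<\kappa$ (regularity) and take $p=\bigcup_\eta p_\eta$. The key verification is that $p$ is again a legitimate condition, i.e. $p=T\restric\alpha$ for some $T\in\silver$: the uniform splitting pattern below $\alpha$ is inherited from the $p_\eta$, and every node of $p$ has extensions of length cofinal in $\alpha$ (since each $p_\eta$ is pruned up to its top), so the level-$\alpha$ branches are well defined and one may continue each of them uniformly above $\alpha$ to manufacture a genuine Silver tree $T$ with $T\restric\alpha=p$. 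Then $p\le p_\eta$ for all $\eta$ by transitivity of end-extension.

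Before the Cohen property I would record three density facts, all immediate from the freedom to design a Silver tree above the top level of a condition: (i) for each $\beta<\kappa$ it is dense to have height $\geq\beta$; (ii) it is dense to make the top level a splitting level; (iii) it is dense to make the top level non-splitting. Genericity together with (i)--(ii) and the inherited uniformity shows at once that $T_G$ is a Silver tree of height $\kappa$.

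The crux is that every $x\in[T_G]$ is Cohen over $\model{N}$. For a dense open $D\subseteq 2^{<\kappa}$ coded in $\model{N}$, I consider $E_D:=\{p\in\mathbb{VT} : \term(p)\subseteq D\}$ and claim it is dense; granting this, genericity yields cofinally many levels $\beta$ at which every node of $T_G$ lies in $D$, so every branch meets $D$, and ranging over all such $D\in\model{N}$ gives Cohenness of every branch. The main obstacle is proving $E_D$ dense while respecting the uniform splitting of Silver conditions: one cannot route the $<\kappa$-many top nodes $\{t_j : j<\theta\}$ of a condition into $D$ independently. I would instead drive them in simultaneously by appending a single common suffix with no splitting, building an increasing chain of suffixes $c_0\subseteq c_1\subseteq\cdots$ with $t_j{}^\conc c_j\in D$ (possible as $D$ is dense) and using openness of $D$ to keep the earlier nodes inside $D$; since $\theta<\kappa$ and $\kappa$ is regular the union $c$ has length $<\kappa$, and appending $c$ with no splitting is exactly the Silver non-splitting pattern, so the resulting condition lies in $E_D$ below $p$.

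Finally, for $\level(T_G)$ I argue stationarity and co-stationarity directly by genericity using $<\kappa$-closure. Given a name $\dot C$ for a club and a condition $p$, I build a continuous decreasing sequence $\langle p_\eta\rangle_{\eta<\delta}$ together with strictly increasing top-levels $\gamma_\eta$ such that $p_{\eta+1}$ forces an element of $\dot C$ into the interval $(\gamma_\eta,\gamma_{\eta+1})$; taking the $<\kappa$-closure limit $p_\delta=\bigcup_\eta p_\eta$ forces $\gamma_\delta:=\sup_\eta\gamma_\eta\in\dot C$ by closure of the club, and a single further extension making level $\gamma_\delta$ splitting (resp. non-splitting), via density (ii) (resp. (iii)), forces $\gamma_\delta\in\dot C\cap\level(T_G)$ (resp. $\gamma_\delta\in\dot C\setminus\level(T_G)$). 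Density of such conditions gives both stationarity and co-stationarity in $\model{N}[G]$. The equivalence with $\cohen=\mathrm{Add}(\kappa,1)$ then follows from the folklore uniqueness theorem that, under $\kappa^{<\kappa}=\kappa$, any separative atomless $<\kappa$-closed forcing of size $\kappa$ is forcing equivalent to $\mathrm{Add}(\kappa,1)$; atomlessness and separativity of $\mathbb{VT}$ are immediate from density facts (ii) and (iii).
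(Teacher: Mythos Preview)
Your proof is correct and follows essentially the same approach as the paper's. Your common-suffix construction for the density of $E_D$ is exactly the paper's argument unwound: the paper encodes the growing suffix via its $\oplus$ operation and the auxiliary sequence $\langle s_\alpha\rangle$, whose tails above the top of $p$ form precisely your increasing chain $\langle c_j\rangle$, with openness of $D$ used (implicitly in the paper, explicitly by you) to conclude that every $t_j{}^\conc c$ lies in $D$. Your stationarity/co-stationarity argument is the same interleaving trick; the paper simply fixes $\delta=\omega$ and, as a stylistic matter, handles both cases in a single chain by first putting a splitting level into $\dot C$ and then extending further to put a non-splitting level in as well. The only substantive addition on your side is that you actually verify $|\mathbb{VT}|=\kappa$, $<\kappa$-closure, and cite the folklore characterization of $\mathrm{Add}(\kappa,1)$, all of which the paper asserts without proof.
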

\begin{proof}
Fix $p \in \mathbb{VT}$ and $D \subseteq \cohen$ open dense and let $\{ t_\alpha: \alpha < \delta < \kappa \}$, enumerate all terminal nodes of $p$ (w.l.o.g. assume $\delta$ is a limit ordinal).  We use the following notation: for every $s,t \in 2^{<\kappa}$, put
\[
t \oplus s := \{t' \in 2^{<\kappa}: \forall \alpha < |t|(t'(\alpha)=t(\alpha)) \wedge \forall \alpha \geq |t|( t'(\alpha)= s(\alpha))\}.
\]
Then consider the following recursive construction:
\begin{itemize}
\item[-] pick $s_0 \supseteq t_0$ such that $s_0 \in D$;
\item[-] for $\alpha+1$, pick $s_{\alpha+1} \supseteq t_{\alpha+1} \oplus s_\alpha$ such that $s_{\alpha+1} \in D$.
\item[-] for $\alpha$ limit, put $s'_\alpha= \bigcup_{\xi < \alpha} s_\xi$ and pick $s_{\alpha} \supseteq t_{\alpha} \oplus s'_\alpha$ such that $s_{\alpha} \in D$.
\item[-] once the procedure has been done for every $\alpha < \delta$, we put $s_\delta:= \bigcup_{\alpha<\delta} t_0 \oplus s_\alpha$ and then $t'_{\alpha}:= {t_\alpha} \oplus s_\delta$.
\end{itemize}
Note that to make sure that ${t_\alpha} \oplus s_\delta \in 2^{<\kappa}$ we need to use the assumption that $\kappa$ is inaccessible. 
Finally, let $p'$ be the downward closure of $\bigcup_{\alpha < \delta} t'_\alpha$. By construction, $p' \in \poset{VT}$, $p' \leq p$ and 
for every terminal node $t \in p'$, we get $t \in D$. Hence $p' \force \forall x \in [T_G](H_x \cap D \neq \emptyset)$, where $H_x:= \{ s \in \cohen: s \subset x \}$.  

We now want to further extend $p'$ in order to catch the second property as well, i.e., $\level(T_G)$ is both stationary and co-stationary. So fix $\dot C$ name for a club of $\kappa$. Build  sequences $\{ q_n: n \in \omega \}$ and $\{ \xi_n: n \in \omega \}$ such that: $q_0 = p'$, and $q_{n+1} \leq q_n$ such that $q_{n+1} \force \xi_n \in \dot C$ and $\xi_n > \height(q_{n})$ and $\height(q_{n+1}) > \xi_n$. Finally put $\xi_\omega= \lim_{n < \omega} \xi_n$, $q_\omega := \bigcup_{n \in \omega} q_n$, and then 
\[
p^*:= q_\omega \cup \bigcup \{ t^\conc i: t \in \term(q_\omega) \land i \in \{ 0,1 \}  \}. 
\]
Hence $p^* \force \forall n (\xi_n \in \dot C)$, and then $p^* \force \xi_\omega \in \dot C$. But $\xi_\omega = \height(q_\omega)$, since the $\xi_n$'s and the $|\height(q_n)|$'s are mutually cofinal, and hence $p^* \force \xi_\omega \in \level(T_G) \cap \dot C$. This shows that $\level(T_G)$ is stationary. For proving that it is co-stationary as well, we can further extend $p^*$, by using the same procedure, in order to find 
$\{ q'_n: n \in \omega \}$ and $\{ \xi'_n: n \in \omega \}$ as above and then
$p^{**} \leq q'_\omega$ such that $p^{**}:= q'_\omega \cup \bigcup \{ t^\conc 0: t \in \term(q'_\omega) \}$. Hence 
\[
p^{**} \force \xi_\omega \in \level(T_G) \cap \dot C \land \xi'_\omega \notin \level(T_G) \cap \dot C,
\] 
which completes the proof.
\end{proof}

About generic Miller trees of Cohen branches the situation is very different, since the above argument does not seem to work. The next method shows a simple different way to add a tree $T \in \fullmiller$ of Cohen branches, which we will use in section \ref{miller}. On the opposite side, Lemma \ref{lemma2:cohen-non-miller} marks a necessary condition for adding a tree $T \in \clubmiller$ of Cohen branches, generalizing some results obtained by Spinas and Brendle in the classical setting (see \cite{Sp95} and \cite{Br99}).

We use the following notation: given a tree $T \subseteq \kappa^{<\kappa}$,
\begin{itemize} 
\item[-] $\splitting_\alpha(T)$ is recursively defined as:  

$\splitting_0(T)=\{ \stem(T) \}$; 

$t \in \splitting_\alpha(T)$ iff $t \in \splitting(T)$ and for every $\beta < \alpha$ there exists $t_\beta \subset t$ such that $t_\beta \in \splitting_\beta(T)$.  
\item[-] $T[\alpha]:= \{s \in T: \exists t \in \splitting_\alpha(T) \exists i < \kappa (t^\conc i \in T \land s \subseteq t^\conc i)  \}$. 

\end{itemize}

\begin{lemma} \label{lemma:generic-miller}Define the forcing $\poset{MT}:= \{ p: \exists T \in \fullmiller \exists \alpha < \kappa(p \sqsupseteq T[\alpha]) \}$, ordered by end-extension.
Then $\poset{MT}$ adds a full Miller tree of Cohen branches.
\end{lemma}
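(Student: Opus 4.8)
The plan is to establish the two ingredients hidden in the phrase ``adding a full Miller tree of Cohen branches'': first, that the generic tree $T_G := \bigcup \{ p : p \in G \}$ is a full Miller tree in $\model{N}[G]$, and second, that every branch $x \in [T_G]$ is Cohen over $\model{N}$. Throughout I think of a condition $p \in \poset{MT}$ as a truncation $T[\alpha]$ of a full Miller tree at its $\alpha$-th splitting level, so that $\term(p)$ consists exactly of the nodes $u^\conc i$ with $u \in \splitting_\alpha(T)$ and $i < \kappa$; in particular every splitting node occurring in $p$ already carries all $\kappa$ of its successors, and $p' \leq p$ means $p'$ end-extends $p$ by appending further splitting levels above the frontier of $p$. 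Since $\kappa^{<\kappa}=\kappa$, each $\splitting_\alpha(T)$ has size $\kappa$, so every condition has exactly $\kappa$-many terminal nodes.

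For the first point, fullness is automatic: any splitting node $t$ of $T_G$ occurs in some $p \in G$ as an interior splitting node, and by the shape of the conditions all $\kappa$ successors $t^\conc i$ already belong to $p$, hence to $T_G$. That $T_G$ is a genuine Miller tree of height $\kappa$ follows from a routine density argument: for each node $t$ the set of conditions in which $t$ (if present) has been pushed to a new $\kappa$-splitting node is dense, so $G$ meets it and $T_G$ has $\kappa$-splitting nodes cofinally above every node.

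The heart of the argument is the second point. Fix in $\model{N}$ an open dense set $D \subseteq \cohen = \kappa^{<\kappa}$ and put
\[
E_D := \{ p \in \poset{MT} : \forall t \in \term(p)\, \exists s \subseteq t\, (s \in D) \}.
\]
I will show $E_D$ is dense. Given $p = T[\alpha]$, I build $p' = T'[\alpha+1] \leq p$ by working independently on each of the $\kappa$-many terminal nodes $t \in \term(p)$: use density of $D$ to choose $s_t \supseteq t$ with $s_t \in D$, then declare $s_t$ a splitting node and append all $\kappa$ successors $s_t^\conc j$. Letting $T'$ be any full Miller tree continuing this frontier fully above each $s_t^\conc j$, we get $s_t \in \splitting_{\alpha+1}(T')$ and $T'[\alpha+1]=p'$, so $p' \in \poset{MT}$, $p' \leq p$, and every terminal node $s_t^\conc j$ of $p'$ has the initial segment $s_t \in D$. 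In contrast with the Silver case of Lemma~\ref{lemma:generic-silver}, the extensions of distinct terminal nodes are completely uncoupled, so the inaccessibility exploited there plays no role here.

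It remains to turn density of $E_D$ into genericity of the branches. The key observation is that every branch $x \in [T_G]$ passes through exactly one terminal node of each condition $p \in G$: because $p$ has the shape of a full-Miller truncation, $x$ cannot exit $p$ at a splitting node (all successors of such a node already lie in $p$), so the last node of $x$ lying in $p$ is forced to be a terminal node of $p$. Choosing $p \in G \cap E_D$, the terminal node $t$ through which $x$ passes satisfies $t \supseteq s$ for some $s \in D$, whence $x \restric |s| = s \in D$; that is, the Cohen condition determined by $x$ meets $D$. As $D \in \model{N}$ was an arbitrary dense set and $x \in [T_G]$ an arbitrary branch, every branch of $T_G$ is Cohen over $\model{N}$. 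The main obstacle is exactly the density of $E_D$: one must steer all $\kappa$ terminal nodes into $D$ at once while keeping the result a legitimate truncation $T'[\alpha+1]$ of a full Miller tree, together with the fullness-based verification that each branch really leaves every condition precisely at a terminal node.
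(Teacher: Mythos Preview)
Your proof is correct and follows essentially the same approach as the paper: for each open dense $D\subseteq\cohen$ and each condition $p$, extend every terminal node $t$ of $p$ to some $s_t\in D$ and form $p'$ as the downward closure of $\{s_t^\conc\xi : t\in\term(p),\ \xi<\kappa\}$, so that $p'$ forces every branch of $T_G$ to meet $D$. Your additional remarks---that $T_G$ is indeed a full Miller tree, that every branch exits each condition through a terminal node, and that (unlike the Silver case) no inaccessibility of $\kappa$ is needed because the extensions of distinct terminal nodes are uncoupled---are correct and flesh out points the paper leaves implicit.
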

\begin{proof}
Let $D \subseteq \cohen$ be open dense and $p \in \poset{MT}$. Pick $\phi: \term(p) \rightarrow \kappa^{<\kappa}$ such that $\phi(t) \in D$ and $\phi(t) \supseteq t$, and then define $p' \leq p$ as the downward closure of 
$\bigcup \{ \phi(t)^\conc \xi : t \in \term(p) \land \xi \in \kappa \}$. Then $p' \force \forall x \in [T_G](H_x \cap D \neq \emptyset)$, where $H_x:= \{ s \in \cohen: s \subset x \}$.
\end{proof}

\begin{remark} \label{true-amoeba}
Note that in the proof of both Lemma \ref{lemma:generic-silver} and Lemma \ref{lemma:generic-miller}, we have proved that one can add a certain type of generic tree whose branches are Cohen in the extension $\model{N}[G]$, where $G$ is $\poset{VT}$- and $\poset{MT}$-generic over $\model{N}$, respectively. In the application that we will see in the next sections, we actually need something stronger, i.e., that all branches of the generic tree have to be Cohen in \emph{any} extension $\model{M} \supseteq \model{N}[G]$ via a $<\kappa$-closed forcing. But, this is actually implicit in our proof. Indeed, in Lemma \ref{lemma:generic-miller} we have proven that, for every $D \subseteq \cohen$ open dense in $\model{N}$,  
\[
\model{N}[G] \models \varphi : \equiv \exists F \subseteq \kappa^{<\kappa} \forall x \in \kappa^\kappa (x \in [T_G] \Rightarrow \exists t \in F(t \subset x \land t \in D)), 
\]
and analogously for Lemma \ref{lemma:generic-silver} with $2^\kappa$ in place of $\kappa^\kappa$. 
\noindent
Note that this formula $\varphi$ is $\SSigma^1_2(\kappa^\kappa)$. Hence, it is upward absolute between $\model{N}[G]$ and any extension $\model{M}$ via $<\kappa$-closed forcing (this to ensure   $(\kappa^{<\kappa})^\model{M} = (\kappa^{<\kappa})^{\model{N}[G]}$ and then $\SSigma^1_1$-absoluteness). Hence, we get $\model{M} \models \varphi$, which means $\model{M} \models \forall x \in [T_G] (H_x \cap D \neq \emptyset)$. Since $D \in \model{N}$ was arbitrarily chosen, we have obtained: for every $D \subseteq \cohen \cap \model{N}$, for every $x \in [T_G]^\model{M}$, one has $H_x \cap D \neq \emptyset$. Hence,
$\model{M} \models \forall x \in [T_G](x \text{ is Cohen over } N)$. 
\end{remark}

\begin{lemma} \label{lemma2:cohen-non-miller}
Let $\model{M}$ be a ZFC-model extending the ground model $\model{N}$. If for all $x \in \kappa^\kappa \cap \model{M}$ there exists $y \in \kappa^\kappa \cap \model{N}$ such that $\forall \alpha < \kappa \exists \beta \geq \alpha (x(\beta) < y(\beta))$, then in $\model{M}$ there is no club Miller tree of Cohen branches. In other words, If one adds a club Miller tree of Cohen branches, then one necessarily adds dominating $\kappa$-reals over the ground model.
\end{lemma}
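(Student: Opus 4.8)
The plan is to argue by contraposition: assuming that $\model{M}$ contains a tree $T\in\clubmiller$ all of whose branches are Cohen over $\model{N}$, I will produce a single $d\in\kappa^\kappa\cap\model{M}$ that \emph{eventually} dominates every ground-model $\kappa$-real, i.e.\ $\forall z\in\kappa^\kappa\cap\model{N}$ one has $z(\alpha)<d(\alpha)$ for all but boundedly many $\alpha<\kappa$; this directly negates the hypothesis, which merely says that no $\kappa$-real of $\model{M}$ eventually dominates all of $\model{N}$. The first point to record is a warning about what $d$ cannot be: it cannot be a branch of $T$, nor any function read off a single branch (its sequence of values at splitting levels, its gap/next-splitting-level function, etc.). Indeed every branch is Cohen, hence unbounded but not dominating, so along any branch the free values chosen at splitting levels are infinitely often small. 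The domination must therefore come from the branching geometry of $T$, and specifically from the two club conditions defining $\clubmiller$ — in particular the \emph{closedness} of the set of splitting levels along each branch, which is exactly the feature separating $\clubmiller$ from $\fullmiller$ (for which the analogous statement should fail, since $\fullmiller$ trees of Cohen branches are produced by $\poset{MT}$ in Lemma~\ref{lemma:generic-miller}).

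Concretely, I would first prune $T$ to a subtree $T'\in\clubmiller$ with $T'\subseteq T$; since $[T']\subseteq[T]$, every branch of $T'$ is still Cohen over $\model{N}$, so nothing is lost. During the pruning I fix a clean bookkeeping of the splitting skeleton: the continuous enumeration $\langle\sigma_\xi:\xi<\kappa\rangle$ of the splitting levels along the least-successor branch, together with, for each splitting node $t$, the increasing (continuous) enumeration of its successor club $\successor(t,T')$. The candidate $d$ is then built by a fusion of length $\kappa$: at stage $\xi$ I exploit the \emph{club-many, hence cofinal} successors available at the relevant splitting nodes to make choices that ``catch up'' with the ground-model reals, while simultaneously using the Cohen-ness of the branch being approximated to steer the forced segments between consecutive splitting levels into the ground-model dense subsets of $\cohen$ (so that the branch stays genuinely Cohen and the construction remains inside $[T']$). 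The closedness of the splitting levels enters at limit stages: by the first club condition the node reached at $\sigma_\xi=\sup_{\eta<\xi}\sigma_\eta$ is again a splitting node, whose successor club is closed, and this continuity is what makes the partial data cohere into a total $d$ with all values below $\kappa$; here the standing assumptions $\kappa^{<\kappa}=\kappa$ and the regularity of $\kappa$ are used to keep every relevant supremum strictly below $\kappa$.

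The main obstacle, which is precisely the content that generalizes the classical results of Spinas and Brendle, is to upgrade the genericity of the branches from an ``infinitely often'' statement to the ``eventually'' statement needed for domination: a Cohen branch meets each ground-model dense set, but only once, and beats a fixed $z\in\model{N}$ only cofinally often, so one cannot win at a single splitting level and stop. I expect the resolution to play the $\kappa$-many successors at each splitting node against the \emph{closed} unbounded set of splitting levels — most likely via a pressing-down (Fodor-style) argument on a stationary set of limit splitting levels, so that the closure of the successor clubs absorbs, at each limit, all the domination demands accumulated below it, yielding a diagonal choice that exceeds $z(\alpha)$ for all large $\alpha$ and all ground-model $z$ at once. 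The two delicate points I anticipate are: (i) carrying out the diagonalization against the whole family $\kappa^\kappa\cap\model{N}$ rather than an explicit enumeration of it (which may have size $>\kappa$) — this is exactly where Cohen genericity, as opposed to naive bookkeeping, must do the work; and (ii) checking that the pruning which forces large successor-values does not destroy Cohen-ness of the branches, which should hold because the forced segments are chosen inside $\model{M}$ and can always be routed through the $\model{N}$-dense sets. The second club condition of $\clubmiller$ (club-many successors at each splitting node) is what keeps step~(i) feasible, and I regard step~(i) as the crux of the whole proof.
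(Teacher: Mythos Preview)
Your proposal is not yet a proof; it is a plan whose decisive step --- producing a single $d\in\model{M}$ that eventually dominates every element of $\kappa^\kappa\cap\model{N}$ --- is left open, and the tools you reach for (a fusion along a branch, a Fodor-style pressing-down) are neither what the paper uses nor do they visibly close the gap. Your worry (i), that one cannot diagonalize against a family of possible size $>\kappa$, is a genuine obstruction to the direct construction you sketch, and nothing in the outline explains how Cohen genericity of the branches would overcome it.

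The paper sidesteps this difficulty by separating the construction of the dominating real from the verification. The function $h\in\model{M}$ is defined purely from the tree: for each splitting node $t$ set $h_t(\alpha)$ to be one more than the length of the shortest splitting node extending some $t^\conc\xi$ with $\xi\geq\alpha$; since $\kappa^{<\kappa}=\kappa$ there are only $\kappa$ many $h_t$, so a single $h$ eventually dominating each $h_t$ exists by ordinary diagonalization in $\model{M}$. One then fixes an arbitrary increasing $z\in\model{N}$ and argues by contradiction: associated to $z$ is a concrete closed nowhere dense set $B(z)\in\model{N}$, and if $z$ is not eventually dominated by $h$, the cofinally many $\beta$ with $h(\beta)<z(\beta)$ let one thread a branch $x\in[T]$ (at each splitting node choose a successor $\gamma\geq\beta$ in the successor club and pass to the next splitting node, whose length is then $<z(\gamma)$) landing inside $B(z)$ --- contradicting that $x$ is Cohen over $\model{N}$. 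The two club clauses of $\clubmiller$ enter exactly here: club successors supply the required $\gamma$, and closedness of the splitting levels keeps limit stages of the branch construction inside $\splitting(T)$. No Fodor, no fusion, and no enumeration of $\kappa^\kappa\cap\model{N}$ is needed. The ``next-splitting-level'' data you mentioned and then set aside is in fact the heart of the argument, once it is amalgamated over all splitting nodes rather than read off a single branch.
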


\begin{proof}
Let $T \in \clubmiller$ and $t \in \splitting(T)$. Define 
\[
h_t(\alpha):= \min \{ |t'|: \exists \xi \geq \alpha (t' \in \splitting(T) \land t' \supseteq t^\conc \xi)  \} + 1.
\]
Further, given $z \in \kappa^{\uparrow \kappa} \cap \model{N}$, define $B(z):= \{ x \in \kappa^\kappa: \forall \mu \forall \alpha \leq \mu (z(x(\alpha)) \geq \mu) \}$.
\begin{claim}
$B(z)$ is closed nowhere dense.
\end{claim}
\begin{proof}[Proof of Claim]
To see that $B(z)$ is nowhere dense, fix $s \in \kappa^{<\kappa}$. Then let $s'= s^\conc 0_\beta$, where $0_\beta$ is the sequence of $0$s of length $\beta$, and $\beta$ is sufficiently large that $|s'|> \sup\{ z(s(\alpha)): \alpha < |s| \}$. Hence $[s'] \cap B(z)=\emptyset$.  
\end{proof}

Let $T \in \clubmiller \cap \model{M}$ be a tree of Cohen branches over $\model{N}$. Pick $h \in \kappa^\kappa$ such that $\forall t \in \splitting(T) \exists \alpha < \kappa \forall \xi \geq \alpha (h_t(\xi) < h(\xi))$. To show that $h$ is dominating over $\model{N}$, we argue by contradiction; pick $z \in \kappa^{\uparrow \kappa} \cap \model{N}$ which is not eventually dominated by $h$, and with the further property that $z(0)>|\stem(T)|$.  Let us construct $\{ t_\xi: \xi < \kappa \}$ recursively as follows:
\begin{itemize}
\item $t_0= \stem(T)$ and for $\lambda$ limit ordinal let $t_\lambda= \bigcup_{\xi < \lambda} t_\xi$.
\item Assume $t_\xi$ already defined. By the choice of $z$, there exists $\beta \in \kappa$ such that $h(\beta) < z(\beta)$. We distinguish two cases:
\begin{itemize}
\item if ${t_\xi}^\conc \beta \in T$, then simply put $t_{\xi+1}$ be the least splitting node extending ${t_\xi}^\conc \beta$;
\item if ${t_\xi}^\conc \beta \notin T$, then let $\gamma_\xi:= \min \{ \gamma: \gamma > \beta \land {t_\xi}^\conc\gamma \in T  \}$.  By construction, $h(\gamma_\xi)=h(\beta)$ and so $h(\gamma_\xi) < z(\beta)\leq z(\gamma_\xi)$, since $z$ is increasing. Then let $t_{\xi+1}$ be the least splitting node of $T$ extending ${t_\xi}^\conc \gamma_\xi$. 
\end{itemize}
Note that when $\xi$ is limit, by recursive construction, $t_\xi \in \splitting(T)$, as $t_\xi$ is a limit of splitting nodes in $T$.  Hence the construction works even for $\xi$ successor of a limit ordinal.
Finally let $x= \bigcup_{\xi < \kappa} t_\xi$. It is left to show that $x \in [T] \cap B(z)$, which will give us $x \in [T]$ not Cohen over $\model{N}$, since $B(z) \in \model{N}$ is nowhere dense. Clearly $x \in [T]$, since the construction explicitely gives us cofinally many $\alpha < \kappa$ such that $x \restric \alpha \in T$. To show that $x \in B(z)$, we argue as follows: for every $\alpha<\kappa$, pick the least $\xi<\kappa$ such that $\alpha < |t_\xi|$. By induction over $\xi < \kappa$:
\begin{itemize}
\item $\xi=0$: for every $\alpha < |\stem(T)|$, we have $z(x(\alpha)) > |\stem(T)|$;
\item $\xi$ limit ordinal: trivial;
\item $\xi+1$:  if $\alpha < |t_\xi|$ use inductive hypothesis. If $|t_\xi|\leq \alpha < |t_{\xi+1}|$, then $x(|t_\xi|)=t_{\xi+1}(|t_\xi|)=\gamma_\xi$, and so by choice of $\gamma_\xi$, it follows that for every $\alpha < |t_{\xi+1}|$, $z(x(\alpha))\geq z(\gamma_\xi) > |t_{\xi+1}|$, since $z$ is increasing.
\end{itemize} 
\end{itemize}
\end{proof}

\begin{corollary}
$\cohen$ does not add a generic $T \in \clubmiller$ of Cohen branches.
\end{corollary}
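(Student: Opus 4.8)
The plan is to derive this corollary as an immediate application of Lemma \ref{lemma2:cohen-non-miller}. That lemma gives a sufficient criterion for the absence of a club Miller tree of Cohen branches in an extension $\model{M} \supseteq \model{N}$: namely, that every $\kappa$-real $x$ in $\model{M}$ be \emph{unbounded-below} by some ground-model $\kappa$-real $y$, in the precise sense that $\forall \alpha < \kappa \exists \beta \geq \alpha\, (x(\beta) < y(\beta))$. The contrapositive phrasing in the lemma says that adding a club Miller tree of Cohen branches forces the addition of a dominating $\kappa$-real. So it suffices to verify that $\cohen$, the $\kappa$-Cohen forcing, does not add a dominating $\kappa$-real; equivalently, that the hypothesis of Lemma \ref{lemma2:cohen-non-miller} is satisfied with $\model{M}$ the Cohen extension.

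The key step is therefore purely about Cohen forcing: I would show that for every $x \in \kappa^\kappa \cap \model{M}$ there is $y \in \kappa^\kappa \cap \model{N}$ with $\forall \alpha \exists \beta \geq \alpha\,(x(\beta) < y(\beta))$. Let $\dot x$ be a $\cohen$-name for a $\kappa$-real and fix a condition $p$. The natural approach is a density/genericity argument exploiting that $\cohen = \mathrm{Fn}(\kappa,\kappa,{<}\kappa)$ (or $2^{<\kappa}$-style conditions, depending on presentation) is $<\kappa$-closed and has size $\kappa$, so names for values $\dot x(\beta)$ are decided by conditions living in an initial chunk of the poset. Concretely, one builds in $\model{N}$ a function $y$ dominating all the ``possible values'' read off from a maximal antichain or from a $<\kappa$-closed fusion of conditions: at each $\beta$, the set of ordinals that can be forced as $\dot x(\beta)$ below a fixed condition is covered, and since each such set has size $<\kappa$ and $\kappa$ is regular, one can choose $y(\beta) < \kappa$ strictly above all of them along a cofinal set of coordinates. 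This yields $x(\beta) < y(\beta)$ cofinally often in any generic, which is exactly the required property. In fact the conclusion is standard: $\cohen$ adds no dominating $\kappa$-real, indeed the generic is itself infinitely-often-escaped by ground-model reals.

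The main obstacle is getting the quantifier structure exactly right, since the criterion in Lemma \ref{lemma2:cohen-non-miller} asks only for infinitely-often (cofinally-often) domination $x(\beta) < y(\beta)$, not eventual domination — this is the weaker and hence easier direction, which is precisely why Cohen forcing suffices. One must be careful that the $<\kappa$-closure is used to decide $<\kappa$-many values coherently, and that regularity of $\kappa$ together with $\kappa^{<\kappa} = \kappa$ keeps the relevant suprema below $\kappa$. Once the unboundedness-below property is established for the Cohen extension, Lemma \ref{lemma2:cohen-non-miller} applies verbatim with $\model{M}$ the $\cohen$-extension of $\model{N}$, and we conclude that $\model{M}$ contains no club Miller tree of Cohen branches over $\model{N}$, which is the assertion of the corollary.
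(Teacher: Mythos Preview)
Your approach is correct and matches the paper's: the corollary is stated without proof immediately after Lemma \ref{lemma2:cohen-non-miller}, the intended reasoning being exactly that $\kappa$-Cohen forcing adds no dominating $\kappa$-real, so the hypothesis of that lemma is satisfied. One small caution about your sketch: the claim that for fixed $\beta$ the set of possible forced values of $\dot x(\beta)$ below a condition has size $<\kappa$ is not right in general (maximal antichains in $\cohen$ can have size $\kappa$); the standard argument instead enumerates the $\kappa$-many conditions with cofinal repetition and, for each $\xi$, picks an extension deciding $\dot x(\xi)$, letting $y(\xi)$ exceed that decided value --- but as you note, the fact itself is routine.
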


\section{Stationary-Silver vs club-Silver} \label{silver}
Silver forcing may be introduced by using partial functions $f:\kappa \rightarrow 2$, ordered by extension; simply identify such an $f$ with the tree $T_f := \{x \in 2^\kappa: \forall \alpha \in \dom(f) (f(\alpha)=x(\alpha))  \}$. We will use $T$ and $f_T$ interchangeably, depending on the situation. Throughout this section, $T_f$ will denote the tree associated with a given $f$, and vice versa, $f_T$ will denote the partial function associated with a given $T$. Note that $\dom(f)= \kappa \setminus \level(T_G)$.

In this section we want to investigate the family of $\clubsilver$-measurable and $\statsilver$-measurable sets. 

\begin{lemma} \label{lemma:non-club-silver-meas}
There exists a $\SSigma_1^1$ set which is not $\clubsilver$-measurable (i.e., the club filter \club).
\end{lemma}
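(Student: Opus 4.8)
The plan is to let the witnessing set be the club filter itself. Viewing each $x \in 2^\kappa$ as the subset $x^{-1}(1) := \{\alpha < \kappa : x(\alpha) = 1\}$ of $\kappa$, I set
\[
\club := \{ x \in 2^\kappa : x^{-1}(1) \text{ contains a closed unbounded subset of } \kappa \}.
\]
First I would verify that $\club$ is $\SSigma^1_1$: the statement ``$x \in \club$'' is witnessed by the existence of a $\kappa$-real coding a strictly increasing continuous enumeration of a club $C$, together with the closed side-condition that the range of this enumeration be contained in $x^{-1}(1)$. Thus $\club$ is the projection of a closed subset of $2^\kappa \times \kappa^\kappa$. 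This is exactly the complexity computation of Halko and Shelah recalled in point (1) of the introduction (\cite{HS01}).

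The heart of the matter is the rigidity of Silver trees. Recall that a branch $x \in [T']$ of a Silver tree $T'$ is free to take either value at each level belonging to $\level(T')$ and is pinned to $f_{T'}(\alpha)$ at each $\alpha \notin \level(T')$; moreover, if $T' \in \clubsilver$ then $\level(T')$ is closed unbounded. I would then establish the following strong, ``everywhere'' form of failure: for every $T' \in \clubsilver$ the branch set $[T']$ meets both $\club$ and its complement. Writing $E' := \level(T')$, let $x_1 \in [T']$ be the branch that chooses the value $1$ at every level of $E'$ and $x_0 \in [T']$ the branch that chooses $0$ at every level of $E'$ (both are genuine branches, precisely because each level of $E'$ is a splitting level). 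Then $x_1^{-1}(1) \supseteq E'$ contains a club, whence $x_1 \in \club$; while $x_0^{-1}(0) \supseteq E'$, and since a set containing a club is stationary, no club can be a subset of $x_0^{-1}(1)$, whence $x_0 \notin \club$.

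From this the conclusion is immediate: given any $T \in \clubsilver$, every refinement $T' \subseteq T$ with $T' \in \clubsilver$ has $[T']$ containing points of $\club$ and points of its complement, so neither $[T'] \subseteq \club$ nor $[T'] \cap \club = \emptyset$ can hold. Hence $\club$ is not $\clubsilver$-measurable --- in fact it is $\clubsilver$-nowhere measurable. The only step demanding attention is the transition from ``club-many free coordinates'' to actual (non-)membership in $\club$: one must use that club splitting lets us simultaneously prescribe the value $1$ on a club and the value $0$ on a club, and that a club is stationary, so that $x_0^{-1}(0) \supseteq E'$ already obstructs every club inside $x_0^{-1}(1)$. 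This is exactly the feature that fails for $\statsilver$, where the splitting levels need only be stationary, and it foreshadows the contrast between $\clubsilver$ and $\statsilver$ developed in the rest of the section.
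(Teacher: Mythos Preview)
Your proposal is correct and follows essentially the same approach as the paper: both define, for an arbitrary $T' \in \clubsilver$, the branch taking value $1$ on every splitting level (your $x_1$, the paper's $x$) to land in $\club$, and the branch taking value $0$ on every splitting level (your $x_0$, the paper's $y$) to land outside $\club$. The only cosmetic differences are that you spell out the $\SSigma^1_1$ complexity of $\club$ explicitly (the paper defers this to the Halko--Shelah reference in the introduction), and that the paper states the slightly sharper conclusion $y \in \ns$ rather than merely $x_0 \notin \club$; your argument in fact yields this too, since $x_0^{-1}(1)$ is disjoint from the club $E'$ and is therefore non-stationary.
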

\begin{lemma} \label{lemma:projective-silver-meas}
Assume $\kappa$ be inaccessible. Let $\cohen_{\kappa^+}$ be a $\kappa^+$-iteration of $\kappa$-Cohen forcing with $< \kappa$ support, and let $G$ be the $\cohen_{\kappa^+}$-generic filter over $\model{N}$. Then 
\[
\model{N}[G] \models \text{`` all \emph{On}$^\kappa$-definable sets in $2^\kappa$ are $\statsilver$-measurable. ''}
\]
\end{lemma}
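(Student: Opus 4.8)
The plan is to run a Solovay-style argument: use the generic Silver tree of Cohen branches furnished by Lemma \ref{lemma:generic-silver} to manufacture ``perfectly many good reals'', then reduce membership of such a branch in an arbitrary definable set to membership in a ground-model \emph{open} set, which a stationary-Silver tree can always localize. Throughout, $X=\{x:\varphi(x,p)\}$ will be a fixed $\On^\kappa$-definable set with $\On^\kappa$-parameter $p$.

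First I would reduce the parameter. Since $\kappa$ is inaccessible, $2^{<\kappa}=\kappa$, so the $<\kappa$-support iteration $\cohen_{\kappa^+}$ of forcings of size $\kappa$ satisfies the $\kappa^+$-c.c. Hence the name for $p$ is decided by $\kappa$ many antichains of size $\leq\kappa$ and lives in an intermediate model $\model{N}[G_\alpha]$ with $\alpha<\kappa^+$. Re-basing the ground model to $\model{N}[G_\alpha]$ — and observing that the tail $\cohen_{[\alpha,\kappa^+)}$ is again such an iteration — I may assume $p\in\model{N}$. By the quoted equivalence of \cite{FKK14} (the $\On^\kappa$-definable sets are closed under intersection with closed sets and under continuous preimages), it then suffices to establish \emph{weak} $\statsilver$-measurability of $X$: to produce a single $T\in\statsilver$ with $[T]\subseteq X$ or $[T]\cap X=\emptyset$.

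Next I would insert a $\poset{VT}$-step. As $\poset{VT}\cong\cohen$ by Lemma \ref{lemma:generic-silver}, the tail factors as $\poset{VT}*(\text{rest})$, where $\poset{VT}$ adds a tree $T^*\in\silver$ with $\level(T^*)$ stationary, so $T^*\in\statsilver$, all of whose branches are Cohen over $\model{N}$; since the remaining forcing is a $<\kappa$-closed Cohen iteration, Remark \ref{true-amoeba} keeps every $x\in[T^*]$ Cohen over $\model{N}$ in the full model $\model{N}[G]$. The crux is the following localization. For each branch $x\in[T^*]$ the quotient $\cohen_{\kappa^+}/x$ is weakly homogeneous, so the truth value of $\varphi(x,p)$ in $\model{N}[G]$ is decided by $\0$ in that quotient, whence $\model{N}[G]\models\varphi(x,p)$ iff $\model{N}[x]\models(\0\force\varphi(\check x,\check p))$. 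By the forcing theorem this last condition holds exactly when $x$ passes through some $s\in 2^{<\kappa}$ with $s\force^{\model{N}}(\0\force\varphi(\dot c,\check p))$; collecting all such $s$ gives an open set $O\in\model{N}$ with $x\in X\Leftrightarrow x\in O$ for every $x\in[T^*]$. I expect the main obstacle to lie precisely here, in the weak homogeneity of $\cohen_{\kappa^+}/x$ — that is, in showing the branches of $T^*$ are \emph{good reals} in the sense of the introduction. This is exactly where the generalized setting departs from the classical one, and it is where the genericity of $T^*$ (Schlicht's method behind Lemma \ref{lemma:generic-silver}) must be used to recover enough of a factoring lemma.

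Finally, open sets are trivially localized by stationary-Silver trees, and everything after the reduction to $O$ is soft. If $[T^*]\cap O=\emptyset$, then $T^*$ already witnesses $[T^*]\cap X=\emptyset$. Otherwise choose $x\in[T^*]\cap O$ and $s\subset x$ with $[s]\subseteq O$, and set $T^*_s:=\{u\in T^*: u\subseteq s\vee u\supseteq s\}$. Since $s$ agrees with the fixed values of $T^*$ below $|s|$, the tree $T^*_s$ is again Silver, its splitting levels form the tail of $\level(T^*)$ above $|s|$ and hence remain stationary, so $T^*_s\in\statsilver$; and $[T^*_s]\subseteq[s]\subseteq O$ gives $[T^*_s]\subseteq X$. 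In either case $X$ is weakly $\statsilver$-measurable, and as $X$ was an arbitrary $\On^\kappa$-definable set, \cite{FKK14} upgrades this to full $\statsilver$-measurability of all $\On^\kappa$-definable sets.
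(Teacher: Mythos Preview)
Your outline is the same Solovay-style argument the paper runs, and your reduction of the parameter via the $\kappa^+$-cc, your invocation of Remark~\ref{true-amoeba} to keep branches Cohen in the full extension, and your final localization step (passing to $T^*_s$) are all correct. The last step is a harmless variant of what the paper does: rather than restricting $T^*$ below a node $s$, the paper simply takes any stationary-Silver tree of good Cohen branches with $\stem(T)=s$; both work.

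The genuine gap is exactly where you flag it. You assert that for every branch $x\in[T^*]$ the quotient $\cohen_{\kappa^+}/x$ is weakly homogeneous (equivalently, that $x$ is a \emph{good} real), but you do not prove it, and your pointer to ``Schlicht's method behind Lemma~\ref{lemma:generic-silver}'' is misplaced: that lemma only shows $\poset{VT}\cong\cohen$ adds a stationary-Silver tree of Cohen branches; it says nothing about quotients. The paper isolates this as a separate key lemma (Lemma~\ref{quotient-lemma}): for any $\alpha<\kappa^+$ and any $\poset{VT}_\alpha$-name $\dot x$ for a branch of the generic tree, the quotient $\poset{VT}_\alpha/\dot x{=}z$ is again equivalent to $\poset{VT}_\alpha$. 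The proof is not automatic. One first passes to the dense subposet $\poset{VT}^*_\alpha=\{p:|x_p|\geq\height(p(0))\}$ (Claim~\ref{silver-claim1}), then shows that for such $p$ one has $x_p\in\term(p(0))$ exactly (Claim~\ref{silver-claim2}), and---crucially---that every extension $s\supseteq x_p$ is realized as $x_{p^*}$ for some $p^*\leq p$ (Claim~\ref{silver-claim3}); this last step uses the \emph{uniformity} of Silver conditions, replacing all terminal nodes of $p(0)$ by $t\oplus s$. From this one gets that $\{x_{p'}:p'\in D,\ p'\leq p\}$ is dense below $x_p$ for every dense $D$ (Corollary~\ref{cor:dense-cohen}), whence the usual recursive description of the quotient collapses at stage $0$ and yields $\poset{VT}^*_\alpha/\dot x{=}z=\{p:x_p\subset z\}$, a $<\kappa$-closed subforcing equivalent to $\cohen$. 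Without this analysis your equivalence ``$\model{N}[G]\models\varphi(x,p)\iff\model{N}[x]\models(\0\Vdash\varphi(\check x,\check p))$'' and the subsequent reduction to an open $O\in\model{N}$ are unjustified: the inner forcing statement is about the quotient, which a priori varies with $x$, and only the good-quotient lemma makes it uniform.
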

\textsf{Notation}: we will abuse notation by saying that ``$x \in 2^\kappa$ is in \club'', instead of the more correct ``$\{ \alpha < \kappa: x(\alpha)=1 \}$ is in \club''.

We start with the proof of the easier of the two lemmata.
\begin{proof}[Proof of Lemma \ref{lemma:non-club-silver-meas}.] We will show that for every $T \in \clubsilver$, 
\[
\exists x \in 2^\kappa (x \in \club \cap [T]) \land \exists y \in 2^\kappa (y \in \ns \cap [T]),
\]
where $\ns$ is the ideal of non-stationary subsets of $\kappa$. Define $x \in 2^\kappa$ as follows:
\[
x(\alpha):=  
\begin{cases}
f_T(\alpha)		&\text{ if $\alpha \in \dom(f_T)$}, \\
1 				&\text{ otherwise}.
\end{cases}
\]
Then obviously $x \supseteq \level(T)$ and so $x \in \club \cap [T]$. Analogously, we can define
\[
y(\alpha):=  
\begin{cases}
f_T(\alpha)		&\text{ if $\alpha \in \dom(f_T)$}, \\
0				&\text{ otherwise}.
\end{cases}
\]
Hence, $y \in \ns \cap [T]$.
\end{proof}
The rest of this section is devoted to prove Lemma \ref{lemma:projective-silver-meas}. We use a variant of Schlicht's method to only work with branches having good quotient. 
We need the following key lemma. Hereafter, $\poset{VT}_\alpha$ denotes the $<\kappa$-support $\alpha$-iteration of $\poset{VT}$, introduced in section \ref{perfect-cohen}.

\begin{lemma} \label{quotient-lemma}
Let $\alpha < \kappa^+$. Let $\dot T$ be the canonical $\poset{VT}_0$-name for the generic Silver tree added by $\poset{VT}_0$, and $\dot x$ be a $\poset{VT}_\alpha$-name for a Cohen branch through $\dot T$. Let $G$ be the $\poset{VT}_\alpha$-generic filter over $\model{N}$ and $z=\dot x^G$. Then $\poset{VT}_\alpha / \dot x{=}z$ is equivalent to $\poset{VT}_\alpha$.
\end{lemma}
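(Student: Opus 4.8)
The plan is to give an explicit description of the quotient and recognise it as a familiar forcing. First I would record the two background facts I intend to use. By Lemma~\ref{lemma:generic-silver} we have $\poset{VT}\cong\cohen$, and since a $<\kappa$-support iteration of $\kappa$-Cohen forcing of length $\alpha$, with $1\le\alpha<\kappa^+$, is equivalent to $\cohen$ itself (it is equivalent to the $<\kappa$-support product, which under $\kappa^{<\kappa}=\kappa$ is the forcing adding a single subset of $\kappa$ by bounded initial segments), we already know $\poset{VT}_\alpha\cong\cohen$. Hence it suffices to prove that the quotient $\poset{VT}_\alpha/\dot x{=}z$, computed in $\model{N}[z]$, is again equivalent to $\cohen$.

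The heart of the argument is a concrete reading of the quotient. A Silver tree $T$ is completely coded by the pair $(\level(T),f_T)$, where $f_T\colon\kappa\setminus\level(T)\to 2$, and requiring $z\in[T]$ forces $f_T=z\restric(\kappa\setminus\level(T))$. Thus, once $z$ is fixed, the only surviving freedom at the first coordinate is the choice of the splitting-level set $L:=\level(T)\subseteq\kappa$, and any bounded approximation to $L$ together with $z$ reconstitutes a legitimate truncated Silver tree through which $z$ passes. I would therefore show that the first-coordinate part of $\poset{VT}_\alpha/\dot x{=}z$ is densely isomorphic, via $T\restric\gamma\mapsto L\cap\gamma$, to the forcing adding $L\subseteq\kappa$ by bounded initial segments, i.e. to $\cohen$; the side requirements that $L$ be stationary and co-stationary and that the branches of the resulting tree be Cohen over $\model{N}$ are automatic generic consequences by Lemma~\ref{lemma:generic-silver} and Remark~\ref{true-amoeba}, so they do not cut down the poset. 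Assuming for the moment that $z$ lies in the first-coordinate extension $\model{N}[G\restric 1]$, we have $\model{N}[z][L]=\model{N}[G\restric 1]$, and since the tail of the iteration depends on the first coordinate only through the model it produces, the whole quotient factors as $\cohen*\poset{VT}_{[1,\alpha)}$, which is $\cong\poset{VT}_\alpha\cong\cohen$. In particular the quotient is separative, $<\kappa$-closed, atomless and of density $\kappa$, so one may alternatively invoke the standard characterisation of $\kappa$-Cohen forcing to close the argument.

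Two points need care, and I expect the second to be the main obstacle. First, one must pass to a dense set of conditions deciding longer and longer initial segments $\dot x\restric\gamma=z\restric\gamma$ and whose first coordinate is a genuine truncation containing $z\restric\gamma$, so that the commitment each such condition records is exactly a condition of the concrete forcing; at limit stages $<\kappa$ the requisite lower bounds are supplied by the $<\kappa$-closure of $\poset{VT}_\alpha$. Second, and this is the delicate part, $\dot x$ is a priori a $\poset{VT}_\alpha$-name and may read its branch off the whole iteration rather than off the first coordinate, so the simplifying assumption $z\in\model{N}[G\restric 1]$ must be discharged; here I would use that $z$ is Cohen over $\model{N}$ — and, by Remark~\ref{true-amoeba}, over every $<\kappa$-closed extension — to argue that the constraint ``$\dot x$ is the branch $z$ of $\dot T$'' factors through the coordinate-$0$ tree $\dot T$, so that a decreasing sequence of quotient conditions always admits a lower bound keeping $z$ a branch of a legitimate Silver tree with stationary co-stationary splitting. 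It is exactly this ``goodness'' of the Cohen branch $z$ that rules out the pathology of point~2 of the introduction and makes the quotient re-absorb into $\poset{VT}_\alpha$; once it is in place, the factorisation $\cohen*\poset{VT}_{[1,\alpha)}\cong\poset{VT}_\alpha$ completes the proof.
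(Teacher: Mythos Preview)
Your proposal has a real gap precisely where you yourself flag ``the main obstacle.'' The decomposition strategy---strip off a first-coordinate quotient, then tack on the tail $\poset{VT}_{[1,\alpha)}$---presupposes that the constraint $\dot x=z$ is essentially a first-coordinate constraint, i.e.\ that it reduces to ``$z\in[\dot T]$.'' But $\dot x$ is a $\poset{VT}_\alpha$-name, so whether a condition $p$ forces $\dot x(\xi)\neq z(\xi)$ depends on \emph{all} of $p$, not just $p(0)$; in particular there is no reason $z$ lies in $\model{N}[G\restric 1]$, and your map $T\restric\gamma\mapsto L\cap\gamma$ does not describe the actual quotient. Your attempt to discharge this---``use that $z$ is Cohen to argue the constraint factors through $\dot T$''---is not an argument but a restatement of what must be proved: that $z$ is a \emph{good} real in the sense of the introduction is exactly the content of the lemma.

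The paper avoids the factorisation entirely. It first passes to the dense subposet $\poset{VT}^*_\alpha=\{p:|x_p|\ge\height(p(0))\}$ and shows (Claims~\ref{silver-claim1}--\ref{silver-claim3}) that for such $p$ the decided initial segment $x_p$ is a terminal node of $p(0)$ and that every extension of $x_p$ is realised as some $x_{p^*}$ with $p^*\le p$. The payoff is Corollary~\ref{cor:dense-cohen}: for any open dense $D\subseteq\poset{VT}^*_\alpha$, the set $\{x_{p'}:p'\in D,\ p'\le p\}$ is dense in $\cohen$ below $x_p$. Now compute the quotient via the standard recursion $A_0=\{p:\exists\xi\ p\Vdash\dot x(\xi)\neq z(\xi)\}$, $A_{\beta+1}=\{p:\exists D\in\model{N}\text{ dense below }p,\ D\subseteq A_\beta\}$. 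Cohen-ness of $z$ together with Corollary~\ref{cor:dense-cohen} forces the recursion to halt at $\gamma=0$: if $p\in A_1\setminus A_0$ witnessed by $D\subseteq A_0$, then $z$ (being Cohen, and with $x_p\subset z$) meets the $\cohen$-dense set $\{x_{p'}:p'\in D\}$, contradicting $D\subseteq A_0$. Hence the quotient is exactly $\{p\in\poset{VT}^*_\alpha:x_p\subset z\}$, visibly a $<\kappa$-closed atomless poset of size $\kappa$, i.e.\ $\cohen\cong\poset{VT}_\alpha$. The crucial move you are missing is Corollary~\ref{cor:dense-cohen}: it is what converts ``$z$ is Cohen over $\model{N}$'' into information about dense subsets of the \emph{iteration}, bypassing any need to factor.
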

Note that, unlike Schlicht's work, here the name for a branch comes from a ``larger'' forcing than the one adding the generic tree. So we need a slight generalization of his argument. 

\textsf{Notation}: from now on, $\dot x$, $\dot T$, $G$ will be as in the statement of Lemma \ref{quotient-lemma}, while $x_p$ will denote the initial segment of $\dot x$ decided by $p:= \langle \dot p(\xi):\xi < \alpha \rangle \in \poset{VT}_\alpha$.

We prove some preliminary results. 

\begin{claim} \label{silver-claim1}
$\poset{VT}^*_\alpha:=\{ p \in \poset{VT}_\alpha: |x_p| \geq \height(p(0)) \}$ is dense in $\poset{VT}_\alpha$.
\end{claim}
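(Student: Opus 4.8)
The plan is to prove density directly, by a construction of length $\omega$ that lets the decided initial segment of $\dot x$ ``catch up'' with the height of the coordinate $0$ part. So fix an arbitrary $p \in \poset{VT}_\alpha$; I must produce $q \leq p$ with $|x_q| \geq \height(q(0))$. The two tools I rely on throughout are that $\poset{VT}$ is $<\kappa$-closed (Lemma \ref{lemma:generic-silver}), and hence so is the $<\kappa$-support iteration $\poset{VT}_\alpha$ (using $\kappa^{<\kappa}=\kappa$). This allows me both to decide any initial segment of $\dot x$ of length $<\kappa$ (by deciding each coordinate $\dot x(\gamma)$ along a descending sequence of length $<\kappa$ and taking a lower bound) and to take greatest lower bounds of descending $\omega$-sequences.

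First I would build a descending sequence $\langle p_n : n < \omega \rangle$ together with ordinals $\beta_n := \height(p_n(0))$, as follows. Set $p_0 := p$. Given $p_n$, since $\beta_n < \kappa$ and $\dot x$ is forced to be a branch of length $\kappa$ through $\dot T$, I use $<\kappa$-closure to find $p_{n+1} \leq p_n$ deciding $\dot x \restric \beta_n$, so that $|x_{p_{n+1}}| \geq \beta_n$. Note that passing to $p_{n+1}$ may end-extend the coordinate $0$ part, so in general only $\beta_{n+1} \geq \beta_n$ holds and the branch has not yet reached the tree. Finally let $q$ be a lower bound of $\langle p_n : n<\omega\rangle$, which exists by $<\kappa$-closure, and put $\beta_\omega := \sup_n \beta_n$.

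The crux is that at the limit the two quantities coincide. On one hand, the coordinate $0$ part $q(0)$ is the end-extension union $\bigcup_n p_n(0)$ of initial segments $p_n(0) = T_n \restric \beta_n$ whose heights increase to $\beta_\omega$; since every node of $q(0)$ has length $<\beta_\omega$ while these lengths are cofinal in $\beta_\omega$, I get $\height(q(0)) = \beta_\omega$. On the other hand, $q \leq p_{n+1}$ decides $\dot x \restric \beta_n$ for every $n$, hence $q$ decides $\dot x(\gamma)$ for every $\gamma < \sup_n \beta_n = \beta_\omega$, giving $|x_q| \geq \beta_\omega$. Therefore $|x_q| \geq \beta_\omega = \height(q(0))$, so $q \in \poset{VT}^*_\alpha$ and $q \leq p$, establishing density.

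The step I expect to be the main obstacle is exactly this convergence at the limit. A one-step attempt (simply deciding $\dot x \restric \height(p(0))$) need not work, because deciding a longer initial segment of $\dot x$ can force the coordinate $0$ part to grow, so at no finite stage need the decided length of the branch reach the current height of the tree. The $\omega$-construction circumvents this because the decided lengths, satisfying $|x_{p_{n+1}}| \geq \beta_n$, and the heights $\beta_n$ share the common supremum $\beta_\omega$. I should also verify carefully that the greatest lower bound keeps $\height(q(0))$ equal to $\beta_\omega$ rather than overshooting it, which is precisely where the end-extension ordering of $\poset{VT}$ is used.
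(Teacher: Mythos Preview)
Your proposal is correct and follows essentially the same approach as the paper: build a descending $\omega$-sequence $\langle p_n\rangle$ where $p_{n+1}$ decides $\dot x$ past $\height(p_n(0))$, then take the union $p_\omega$ and observe that the decided initial segment $w=\bigcup_n x_{p_n}$ has length exactly $\height(p_\omega(0))$. Your write-up is somewhat more explicit than the paper's about why the limit step works (the interleaving of the $\beta_n$'s and the $|x_{p_{n+1}}|$'s forcing a common supremum, and the end-extension ordering ensuring $\height(q(0))$ does not overshoot), but the argument is the same.
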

\begin{proof}
Given $p \in \poset{VT}_\alpha$ we have to find $p' \leq p$ in $\poset{VT}^*_\alpha$.
Start with $p_0:=p$ and then, for every $n \in \omega$, pick $p_{n+1} \leq p_n$ such that $|x_{p_{n+1}}| > \height(p_{n}(0))$. Let $p_\omega:=\bigcup_{n \in \omega} p_n$ and $w:=\bigcup_{n \in \omega} x_{p_n}$. Then $w \subseteq x_{p_\omega}$ and $|w|=\height(p_\omega(0))$. Hence $p':=p_\omega$ has the required property. 
\end{proof} 

In the following two claims, we need to work with conditions forcing $\dot x \in \dot T$. Note that, for every $p_0 \in \poset{VT}^*_\alpha$ we can always find  $p \leq p_0$ such that $p \force \dot x \in \dot T$. Hence, from now on, we will always consider conditions $p$ sufficiently strong to force $\dot x \in \dot T$. 
\begin{claim} \label{silver-claim2}
For every $p \in \poset{VT}^*_\alpha$ we have $|x_p|=\height(p(0))$.
\end{claim}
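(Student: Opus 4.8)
The inequality $|x_p| \geq \height(p(0))$ is built into membership in $\poset{VT}^*_\alpha$, so the entire content of the claim is to exclude $|x_p| > \height(p(0))$. Write $h := \height(p(0))$ and argue by contradiction, assuming $|x_p| > h$. Then $p$ decides $\dot x$ up to and including position $h$, so there is $i \in \{0,1\}$ with $p \force \dot x(h) = i$. Recall that, by our standing convention, $p$ is strong enough to force $\dot x$ to be a branch of $\dot T$; hence at every \emph{non-splitting} level of $\dot T$ the branch $\dot x$ is forced to take the value dictated by the associated Silver function $f_{\dot T}$.

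The plan is to produce two extensions of $p$ forcing $\dot x(h)=0$ and $\dot x(h)=1$, contradicting the above. The decisive point is that the first coordinate $p(0)$, being of the form $T\restric\beta$ with top level $h$, constrains the generic tree $\dot T$ only strictly below level $h$: it decides neither whether $h$ is a splitting level nor, in the non-splitting case, the value $f_{\dot T}(h)$. Before exploiting this I would record the (routine but necessary) reduction that only the first coordinate matters for facts about $\dot T$: since $\dot T$ is a $\poset{VT}_0$-name, it is completely determined by the coordinate-$0$ generic $G_0$, and the tail coordinates $p(\xi)$ for $\xi \geq 1$ merely force over $\model{N}[G_0]$ without constraining $G_0$ further. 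Consequently I may strengthen the first coordinate freely while leaving the tail untouched, and such a strengthening is automatically a condition of $\poset{VT}_\alpha$ below $p$.

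Using this, for $\ell \in \{0,1\}$ let $p_\ell(0) \leq p(0)$ be the Silver condition obtained by end-extending, by the constant value $\ell$, every node of $p(0)$ sitting at the top level $h$: when $h$ is attained by $p(0)$ these are exactly the nodes of $\term(p(0))$, and when $h$ is a limit approached but not attained by $p(0)$ I first adjoin all level-$h$ limit nodes and then extend them by $\ell$. By the level-uniform splitting requirement in the definition of $\silver$ this is a legitimate member of $\poset{VT}$, and it makes $h$ a non-splitting level of the generic tree with $f_{\dot T}(h) = \ell$. Setting $p_\ell := \langle p_\ell(0), p(1), p(2), \dots \rangle$ yields $p_\ell \leq p$ in $\poset{VT}_\alpha$, and since $\dot x$ must follow $f_{\dot T}$ at the non-splitting level $h$ we get $p_\ell \force \dot x(h) = \ell$. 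As $p_0$ and $p_1$ force incompatible values for $\dot x(h)$ while both lie below $p$, this contradicts $p \force \dot x(h) = i$; hence $|x_p| = h = \height(p(0))$.

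I expect the main obstacle to be the careful bookkeeping at level $h$: separating the case where $h$ is realized by terminal nodes of $p(0)$ from the limit case where it is only approached, and verifying in each case that adjoining the level-$h$ nodes and fixing their immediate successor value yields a genuine strengthening inside $\poset{VT}$ that respects the level-uniform splitting of Silver trees. The reduction to the first coordinate is routine but should be stated explicitly, since it is precisely what prevents the tail of the iteration from secretly deciding $\dot x(h)$ through information about $\dot T$.
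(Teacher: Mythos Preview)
Your argument is correct and follows the same core idea as the paper's proof: assuming $|x_p| > h:=\height(p(0))$, strengthen only the first coordinate so as to force $\dot x$ to disagree with $x_p$ at or above level $h$, which is impossible below $p$. The paper carries this out by end-extending every terminal node of $p(0)$ by a common suffix chosen so that the resulting node $t'$ through $x_p\restric h$ is incompatible with $x_p$, and then invoking Claim~\ref{silver-claim1} to drop back into $\poset{VT}^*_\alpha$ and read off $t' \subseteq x_{p^*}$. You instead extend by a single bit $\ell$ and argue directly that the resulting condition forces $\dot x(h)=\ell$ (via $\dot x \in [\dot T]$ and $h \in \dom(f_{\dot T})$ with value $\ell$); this is slightly more economical since it avoids the second appeal to $\poset{VT}^*_\alpha$. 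You also treat explicitly the case where $h$ is a limit not attained by any node of $p(0)$ (so $\term(p(0))=\emptyset$), which the paper's phrasing elides. One cosmetic remark: you only need the single extension $p_{1-i}$ to reach the contradiction, not both $p_0$ and $p_1$.
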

\begin{proof} 
Note that $p \force \dot x \in \dot T \land p(0) \sqsubset \dot T$, where $\sqsubset$ means ``initial segment"; hence, there exists $t \in \term(p(0))$ such that $p \force t \subset \dot x$. By contradiction, assume $x_p=t^\conc s$, for some $t \in \term(p(0))$ and non-empty $s \in 2^{<\kappa}$. Let $S$ be the downward closure of $\bigcup\{t \oplus t': t \in \term(p(0))\}$, for some $t' \perp  t^\conc s$ with $t' \supset t$. Let $p' \in \poset{VT}_\alpha$ be defined as 
\[
p' (\iota):=  
\begin{cases}
S		&\text{ if $\iota=0$}, \\
\dot p(\iota)			&\text{ if $\iota > 0$}.
\end{cases}
\] 
Then pick $p^* \leq p'$ such that $p^* \in \poset{VT}^*_\alpha$. Since $p^* \force S \sqsubset \dot T$ and $|x_{p^*}| \geq \height(S)$, it follows that  $t' \subseteq x_{p^*}$ and so $x_{p^*} \perp x_p$, contradicting $p^* \leq p$.
\end{proof}

\begin{claim} \label{silver-claim3}
\[
\forall p \in \poset{VT}^*_\alpha \forall s \in 2^{<\kappa}(x_p \subseteq s \Rightarrow  \exists p^* \in \poset{VT}^*_\alpha (s \subseteq x_{p^*})).
\]
\end{claim}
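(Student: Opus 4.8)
The plan is to build, starting from a condition $p \in \poset{VT}^*_\alpha$ with $x_p \subseteq s$, a stronger condition $p^*$ whose first coordinate has been ``pushed up'' so that its generic tree is forced to pass exactly through $s$, while leaving the tail coordinates essentially untouched. Concretely, since $p \in \poset{VT}^*_\alpha$, Claim \ref{silver-claim2} gives $|x_p| = \height(p(0))$, so $x_p \in \term(p(0))$ after we have strengthened to force $\dot x \in \dot T$. First I would define a new Silver condition $S$ in coordinate $0$ by grafting the finite (i.e. $<\kappa$) path $s$ onto $p(0)$ above the terminal node $x_p$: using the $\oplus$ operation from the proof of Lemma \ref{lemma:generic-silver}, let $S$ be the downward closure of $\bigcup \{ t \oplus (s \restric [|x_p|, |s|)) : t \in \term(p(0)) \}$, so that $S$ extends every terminal node of $p(0)$ by the same block of bits that $s$ adds on top of $x_p$. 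This keeps $S \in \silver$ (all terminal nodes are extended uniformly, preserving the Silver splitting pattern) and ensures $S \sqsupseteq p(0)$ in the $\poset{VT}_0$ ordering, hence $S \leq p(0)$ as a forcing condition.

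Next I would set $p'(0) := S$ and $p'(\iota) := \dot p(\iota)$ for $\iota > 0$, exactly as in the proof of Claim \ref{silver-claim2}, and check $p' \leq p$: the tail coordinates are unchanged, and coordinate $0$ only end-extends $p(0)$, so the iteration ordering is respected. The point of the uniform grafting is that $p'$ forces $s \sqsubset \dot T$, i.e. $s$ lies on the generic tree, so that the generic branch $\dot x$ is now compelled to agree with $s$ up to length $|s|$ before it can split away. Finally I would apply Claim \ref{silver-claim1} to extend $p'$ to some $p^* \leq p'$ with $p^* \in \poset{VT}^*_\alpha$, so that $|x_{p^*}| \geq \height(p^*(0)) \geq |s|$; combined with $p^* \force s \sqsubset \dot T \land \dot x \in \dot T$ this yields $s \subseteq x_{p^*}$, which is exactly the desired conclusion.

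The main obstacle I anticipate is verifying that the grafted tree $S$ is genuinely a legal Silver condition and that $p' \leq p$ is a correct refinement in the iteration, rather than merely in coordinate $0$. Because Silver trees demand that the splitting pattern above equal-length nodes be \emph{identical} across the whole tree, I must graft the \emph{same} suffix $s \restric [|x_p|, |s|)$ onto every terminal node of $p(0)$, not just onto $x_p$; otherwise $S \notin \silver$. The slightly delicate point is that $\dot x$ is a branch through $\dot T$ added by the full iteration $\poset{VT}_\alpha$, not by $\poset{VT}_0$ alone, so I need the forcing $p' \force \dot x \in \dot T$ to remain in force after grafting — this is why I insist on passing back into $\poset{VT}^*_\alpha$ at the end and invoke the standing convention that conditions are always taken strong enough to force $\dot x \in \dot T$. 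Once $S \in \silver$ and $p' \leq p$ are secured, the remaining inferences are routine, paralleling the final lines of the proof of Claim \ref{silver-claim2}.
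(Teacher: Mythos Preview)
Your proposal is correct and follows essentially the same route as the paper: graft the suffix of $s$ above $x_p$ uniformly onto every terminal node of $p(0)$ to form $S$ (the paper writes this as the downward closure of $\bigcup\{t \oplus s : t \in \term(p(0))\}$, which is literally the same tree since $|t| = |x_p|$), set $p'(0) = S$ and leave the tail coordinates alone, then pass to $p^* \leq p'$ in $\poset{VT}^*_\alpha$ via Claim~\ref{silver-claim1}. The only point worth sharpening is your justification that $s \subseteq x_{p^*}$: merely having $s \in \dot T$ does not force $\dot x$ through $s$, but the uniform grafting guarantees $S$ has \emph{no splitting} between levels $|x_p|$ and $|s|$, so any branch of $\dot T$ through $x_p$ --- in particular $\dot x$ --- must pass through $s$; this is exactly what the paper's terse ``it follows that $s \subseteq x_{p'}$'' is using.
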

\begin{proof}
The argument is very similar to the one above. 
Note that for every $p \in \poset{VT}^*_\alpha$, there exists $t_0 \in \term(p(0))$ such that $t_0 = x_p$.
Pick $s \in 2^{<\kappa}$ such that $x_p \subseteq s$.
Let $S$ be the downward closure of $\bigcup\{ t \oplus s: t \in \term(p(0)) \}$.
Define $p' \in \poset{VT}_\alpha$ as follows :
\[
p'(\iota):=  
\begin{cases}
S		&\text{ if $\iota=0$}, \\
\dot p(\iota)			&\text{ if $\iota > 0$}.
\end{cases}
\]
Then pick $p^* \in \poset{VT}^*_\alpha$ such that $p^* \leq p'$. Since $p^* \force S \sqsubset \dot T$ and $|x_{p^*}| \geq \height(S)$, it follows that $s \subseteq x_{p'} \subseteq x_{p^*}$.
\end{proof}
\begin{corollary} \label{cor:dense-cohen}
Let $D \subseteq \poset{VT}^*_\alpha$ be open dense. Then $W_q:=\{ x_p \in 2^{<\kappa}: p \in D \land p \leq q \}$ is dense in $\cohen$ below $x_q$.
\end{corollary}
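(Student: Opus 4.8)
The plan is to unwind what ``dense below $x_q$'' means in $\cohen$ and then reduce everything to Claim \ref{silver-claim3}. Recall that a condition in $\cohen$ is a node of $2^{<\kappa}$ ordered by end-extension, so, reading $q \in \poset{VT}^*_\alpha$, what I must show is: for every $s \in 2^{<\kappa}$ with $s \supseteq x_q$, there is some $x_p \in W_q$ with $x_p \supseteq s$. Unpacking the definition of $W_q$, this amounts to producing a condition $p \in D$ with $p \leq q$ whose decided initial segment $x_p$ extends $s$.

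First I would apply Claim \ref{silver-claim3} to $q$ together with the node $s$. Since $q \in \poset{VT}^*_\alpha$ and $x_q \subseteq s$, the claim yields $p^* \in \poset{VT}^*_\alpha$ with $s \subseteq x_{p^*}$; moreover, inspecting its proof, the witness is obtained by replacing only the $0$-th coordinate $q(0)$ by a stronger Silver condition $S \leq q(0)$ (the downward closure of $\bigcup\{t \oplus s : t \in \term(q(0))\}$) and leaving the higher coordinates unchanged before strengthening into $\poset{VT}^*_\alpha$. Hence in fact $p^* \leq q$. This is the one place that needs care, since Claim \ref{silver-claim3} is phrased without the inequality $p^* \leq q$, so I would make the observation explicit that its construction produces a condition below $q$.

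Next, since $D$ is dense in $\poset{VT}^*_\alpha$, I pick $p \in D$ with $p \leq p^*$. Then $p \leq p^* \leq q$, so $p$ witnesses membership in $W_q$. It remains to check $x_p \supseteq s$, for which I use the monotonicity of the decided initial segment along the iteration order: because $p \leq p^*$ forces everything that $p^*$ forces, and $p^* \force \dot x \restric |x_{p^*}| = x_{p^*}$, we get $p \force \dot x \restric |x_{p^*}| = x_{p^*}$, whence $x_p \supseteq x_{p^*} \supseteq s$. Thus $x_p \in W_q$ and $x_p \supseteq s$, which is exactly what density below $x_q$ requires.

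The argument is short precisely because the substantive work has already been carried out in Claims \ref{silver-claim2} and \ref{silver-claim3}. The only genuinely delicate points are the two I flagged: confirming that the witness $p^*$ delivered by Claim \ref{silver-claim3} may be taken below $q$, and verifying that refining to $p \in D$ below $p^*$ only \emph{lengthens} the decided stem rather than altering it, so that the extension $x_p \supseteq x_{p^*}$ is preserved.
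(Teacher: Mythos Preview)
Your proof is correct and is exactly the argument the paper has in mind: the corollary is stated without proof precisely because it follows from Claim~\ref{silver-claim3} in the way you describe. You were right to flag that the statement of Claim~\ref{silver-claim3} does not assert $p^* \leq p$, while its proof clearly yields this (since $S$ end-extends $p(0)$ and the other coordinates are unchanged before strengthening into $\poset{VT}^*_\alpha$); with that observation and the monotonicity $p \leq p^* \Rightarrow x_p \supseteq x_{p^*}$, the density of $W_q$ below $x_q$ is immediate.
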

\begin{proof}[Proof of Lemma \ref{quotient-lemma}]
 The proof is completely analogous to the one of Schlicht for $\cohen$. We give it for completeness and because we actually deal with $\poset{VT}_\alpha$-names for branches in $\dot T$ instead of $\poset{VT}$-names only.  

We will prove the lemma for $\poset{VT}^*_\alpha$, but since it is forcing equivalent to $\poset{VT}_\alpha$, the same will hold true for the latter as well (and then even for $\cohen_\alpha$). It is well-known that $\poset{VT}^*_\alpha / \dot x{=}z = \poset{VT}^*_\alpha \setminus \bigcup_{\beta < \gamma} A_\beta$, where the elements of this union are recursively defined in $\model{N}[z]$ as follows:
\[
\begin{split}
A_0 		&:=  \{ p \in \poset{VT}^*_\alpha: \exists \xi < \kappa (p \force \dot x(\xi)\neq z(\xi)) \}. \\
A_{\beta+1} &:=  \{ p \in \poset{VT}^*_\alpha: \exists D \subseteq A_\beta \text{ open dense below $p$ }, D \in \model{N} \}.\\
A_\lambda 	&:=  \bigcup_{\beta < \lambda} A_\beta, \text{ for $\lambda$ limit ordinal},\\
\end{split}
\]
and finally $\gamma$ is chosen so that $A_\gamma=A_{\gamma+1}$. 

Note that $\gamma=0$; by contradiction, pick $p \in A_1 \setminus A_0$. Since $p \in A_1$, it follows that there exists $D \subseteq A_0$ such that $D \in \model{N}$ and $D$ is dense below $p$. Then the set $W_p:=\{ x_{p'} \in 2^{<\kappa}: p' \in D \land p' \leq p \}$ is dense in $\cohen$ below $x_p$, by Corollary \ref{cor:dense-cohen}, and so there exists $p' \in D$ such that $x_{p'} \subset z$, as $z$ is Cohen over $\model{N}$ (and $x_p \subset z$, by $p \notin A_0$). Also since $D \subseteq A_0$, it follows $p' \in A_0$. But, by definition, 
\[
\begin{split}
p' \in A_0 		&\Leftrightarrow p' \force \dot x(\xi)\neq z(\xi), \text{ for some $\xi < \kappa$}\\
				&\Leftrightarrow x_{p'}  \not \subset z, 
\end{split}
\]
providing us with a contradiction. Hence we get
\[
\poset{VT}^*_\alpha / \dot x{=}z = \{ p \in \poset{VT}^*_\alpha: \forall \xi < \kappa (p \not \force \dot x(\xi) \neq z(\xi)) \}= \{ p \in \poset{VT}^*_\alpha: x_p \subset z \},
\]
which is a $<\kappa$-closed subset of a forcing equivalent to $\cohen$, and so it is in turn equivalent to $\cohen$.
\end{proof}
We now have all tools needed for proving the main lemma of this section.
\begin{proof}[Proof of Lemma \ref{lemma:projective-silver-meas}]
Let $X \subseteq 2^\kappa$ be a set defined by some formula $\varphi$ with ordinal parameters and $v \in \On^\kappa$, which we may assume to be absorbed into the ground model, by the $\kappa^+$-cc. Also, for any $x \in [T_{G_0}]^{\model{N}[G]}$, there is $\alpha < \kappa^+$ and a $\cohen_\alpha$-name $\dot x$ for such $x$. Note that, by Remark \ref{true-amoeba}, $x$ is Cohen over $\model{N}$, and by Lemma \ref{quotient-lemma}, $\dot x$ has good quotient in $\cohen_{\alpha}$, and hence in $\cohen_{\kappa^+}$ as well. Indeed, $\cohen_{\kappa^+}$ can be viewed as $Q_{\dot x} * \dot R_0 * \dot R_1 $, where $Q_{\dot x}$ is the forcing generated by $\dot x$ (and so it is equivalent to $\cohen$ as $x$ is Cohen over $\model{N}$), while $\force_{Q_{\dot x}} \dot R_0 \cong \cohen_{\alpha}$ (that means, $\model{N}[x] \models \dot R_0^x \cong \cohen_{\alpha}$), since $x$ has good quotient, and finally $\dot R_1$ is just a ``tail'' of $\cohen_{\kappa^+}$, and so it is equivalent to $\cohen_{\kappa^+}$ itself. So let us put $\dot R= \dot R_0 * \dot R_1$, so to have  $\model{N}[x] \models \dot R^x \cong \cohen_{\kappa^+}$.
  
Let $x$ be Cohen over $\model{N}$ with good quotient. Then 
\[
\model{N}[x] \models \text{``}\force_{\dot R^x} \varphi(x)\text{''} \quad \text{or} \quad \model{N}[x] \models \text{``} \not \force_{\dot R^x} \varphi(x)\text{''}.
\]
Assume the former, and put $\theta(x):= \text{``}\force_{\dot R^x} \varphi(x)\text{''} $ Then there exists $s \in \cohen$ such that $s \force \theta (\dot x)$. Pick $T$ stationary-Silver tree of good Cohen branches over $\model{N}$ such that $\stem(T)=s$. Hence, for every $z \in [T]$, we have $\model{N}[z] \models \theta(z)$, and so 
\[
\model{N}[z] \models \text{``} \force_{\dot R^z} \varphi(z)\text{ ''}.
\] 
Since any $z$ has good quotient, it follows that $\dot R^z$ is $\cohen_{\kappa^+}$. That means that there exists $H$ filter $\dot R^z$-generic (i.e., $\cohen_{\kappa^+}$-generic) over $\model{N}[z]$ such that $\model{N}[z][H]=\model{N}[G]$. Hence $\model{N}[G] \models \varphi(z)$, that gives us $\model{N}[G] \models [T] \subseteq X$. 

For the case $\model{N}[x] \models \text{``}\not \force_{\dot R^x} \varphi(x)\text{''}$, simply note that $\text{``}\not \force_{\dot R^x} \varphi(x)\text{''}$ is equivalent to $\text{``}\force_{\dot R^x} \neg \varphi(x)\text{''}$, by weak homogeneity. Hence, a specular argument provides us with $T \in \statsilver$ such that $\model{N}[G] \models [T] \cap X = \emptyset$.
\end{proof}
\begin{remark}
Note that $\level(T)$ is both stationary and co-stationary. As a consequence, $[T]$ is completely disjoint both from $\club$ and from $\ns$, and so there is no contradiction with Lemma \ref{lemma:non-club-silver-meas}.
\end{remark}

\paragraph{A word about the Silver game.}
In the classical setting one can uniformly introduce an unfolding game associated with any notion of regularity coming from a certain tree forcing (see \cite{L98}). Here, we focus on the unfolding game connected to the Silver measurability. To this aim we need to introduce the ideal $\ideal{I}_\poset{V}$ of \emph{Silver small} sets.
\begin{definition}
$X \subseteq 2^\kappa$ is said to be $\silver$\emph{-null} iff for all $T \in \silver$ there exists $T' \leq T$, $T' \in \silver$ such that $[T'] \cap X=\emptyset$. Further, we define $\ideal{I}_\silver$ as the $\kappa^+$-ideal $\kappa^+$-generated by the $\silver$-null sets.
\end{definition}
For emulating the classical unfolding game, we need to satisfy, for every $X \subseteq 2^\kappa$,
\begin{itemize}
\item[(*)] if II has a winning strategy in $G(X)$ then $X \in \ideal{I}_\silver$; 
\item[(**)] if I has a winning strategy in $G(X)$ then there exists $T \in \silver$ such that $[T] \subseteq X$.
\end{itemize}

Nevertheless, in the context of $2^\kappa$ the situation seems to be less clear.  In our generalized setting, the output of the game has to be a $\kappa$-real, and so we consider games of length $\kappa$. The basic idea is the same as the standard case, i.e., player I and II play conditions such that each is stronger than the previous one. \emph{But} what should the rule be at limit steps? First of all, note that at limits it is natural to pick the intersection of all previous moves, and hence we want the forcing to be $<\kappa$-closed. This forces us to work with $\clubsilver$. We essentially have two choices, depending on \emph{who} chooses first at limit steps.

\begin{definition}
We use the following notation: \[ T' \preceq T \text{ iff } T' \leq T \text{ and } |\stem(T')| > |\stem(T)| .\]
Given $X \subseteq 2^\kappa$, we define two games $G_1(X)$ and $G_2(X)$ of length $\kappa$ as follows: for $n < \omega$, player I chooses $T^1_n \preceq T^2_{n-1}$, and player II chooses $T^2_n \preceq T^1_n$. From the first limit ordinal, $G_1(X)$ and $G_2(X)$ are defined differently: 
\begin{itemize}
\item[-] in $G_1(X)$ player I chooses first, i.e., player I first chooses $T^1_\omega \preceq \bigcap_{\xi < \omega} T^2_\xi$; then player II chooses $T^2_\omega \preceq T^1_\omega$. Then the game continues by following this order of choice (so in particular, at any limit $\lambda$, I chooses first).
\item[-] in $G_2(X)$ the situation is reversed: player II first chooses $T^2_\omega \preceq \bigcap_{\xi < \omega} T^2_\xi$; then player I chooses $T^1_\omega \preceq T^2_\omega$. Then the game continues by following this order of choice (so in particular, at any limit $\lambda$, II chooses first).
\end{itemize}
The output of the game will then be $x$ such that $\{ x \}:= \bigcap_\xi [T^1_\xi]$, and we will say that I wins iff $x \in X$, otherwise II wins.
\end{definition}
Unfortunately, both fail to have the desired properties (*) and (**) mentioned above. In fact, the reason for that is strictly connected to the bad behaviour of $\clubsilver$-measurability.
\begin{lemma} \label{lemma:G1-G2}
Player II has a winning strategy in $G_2(\club)$, while player I has a winning strategy in $G_1(\club)$.
\end{lemma}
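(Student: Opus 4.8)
The plan is to isolate a single structural fact about these games and read off both strategies from it. Write $T^1_\xi,T^2_\xi$ for the trees played at stage $\xi$, put $a_\xi=\stem(T^1_\xi)$ and $b_\xi=\stem(T^2_\xi)$, and for a limit $\lambda<\kappa$ let $S_\lambda=\bigcap_{\xi<\lambda}T^1_\xi=\bigcap_{\xi<\lambda}T^2_\xi$ and let $\ell_\lambda$ be the length of the union of all stems played before stage $\lambda$ (equivalently $\ell_\lambda=\sup_{\xi<\lambda}|a_\xi|=\sup_{\xi<\lambda}|b_\xi|$). Since the $\preceq$-rule forces the stem lengths to increase strictly, they are cofinal in $\kappa$, so $\bigcap_\xi[T^1_\xi]$ is the single branch $x=\bigcup_\xi a_\xi$, and $x\restric\ell_\lambda$ is exactly that union of previous stems. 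The one fact I want is: \emph{for every limit $\lambda<\kappa$ one has $\ell_\lambda\in\level(S_\lambda)$, and $E:=\{\ell_\lambda:\lambda<\kappa\text{ limit}\}$ is club.}

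Granting this, the whole lemma is immediate. At a limit stage $\lambda$ the first mover must play a tree $\preceq S_\lambda$, i.e.\ must extend the node $x\restric\ell_\lambda$; and because $\ell_\lambda\in\level(S_\lambda)$ this node is a splitting node of $S_\lambda$, so the first mover may freely decide the bit $x(\ell_\lambda)\in\{0,1\}$. Once decided it is part of the stem, hence a permanent initial segment of $x$ that the opponent can no longer alter. In $G_1(\club)$ player I moves first at every limit, so I plays so as to set $x(\ell_\lambda)=1$ for all limit $\lambda$ (at successor stages I may move arbitrarily); then $E\subseteq\{\alpha:x(\alpha)=1\}$, and since $E$ is club we get $x\in\club$, so I wins. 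In $G_2(\club)$ player II moves first at every limit, so II forces $x(\ell_\lambda)=0$ for all limit $\lambda$; then $\{\alpha:x(\alpha)=1\}$ is disjoint from the club $E$, hence non-stationary, so $x\notin\club$ and II wins. This is precisely the ``bad'' phenomenon behind Lemma \ref{lemma:non-club-silver-meas}: every branch of a club Silver tree can be steered either into $\club$ or into $\ns$, and here the player controlling the limits does the steering.

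The heart of the argument is therefore the structural fact, which I would prove as follows. For any club Silver tree $T$ the stem runs up to the first split, so $|\stem(T)|=\min\level(T)\in\level(T)$; in particular each $|a_\xi|\in\level(T^1_\xi)$ and $|b_\xi|\in\level(T^2_\xi)$. The moves are nested, so $T^i_\eta\subseteq T^i_\xi$ for $\xi\le\eta$ yields $\level(T^i_\eta)\subseteq\level(T^i_\xi)$, and also $\level(T^2_{\xi_0})\subseteq\level(T^1_{\xi_0})$. Fix $\xi_0<\lambda$: then for every $\xi$ with $\xi_0\le\xi<\lambda$ both $|a_\xi|$ and $|b_\xi|$ lie in $\level(T^1_{\xi_0})$, and they are cofinal in $\ell_\lambda$; since $\level(T^1_{\xi_0})$ is closed, $\ell_\lambda\in\level(T^1_{\xi_0})$. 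As $\xi_0<\lambda$ was arbitrary, $\ell_\lambda\in\bigcap_{\xi_0<\lambda}\level(T^1_{\xi_0})=\level(S_\lambda)$. That $E$ is club is clear, as it is the set of limit points below $\kappa$ of the cofinal set of stem lengths $\{|a_\xi|,|b_\xi|:\xi<\kappa\}$. The main obstacle I anticipate is a soft but essential one: I must check that $\clubsilver$ is genuinely $<\kappa$-closed under $\preceq$, so that $S_\lambda$ is a legal condition at all. This reduces to showing $\level(S_\lambda)=\bigcap_{\xi<\lambda}\level(T^i_\xi)$ is still club, which holds because it is an intersection of fewer than $\kappa$ clubs and $\kappa$ is regular. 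It is exactly this closure that obliges the games to be played on $\clubsilver$ rather than on plain $\silver$, and it is what simultaneously guarantees $\ell_\lambda<\kappa$ and that the first mover always has a genuine splitting node at $\ell_\lambda$ to exploit.
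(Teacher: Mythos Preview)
Your proof is correct and follows essentially the same approach as the paper: the paper's $\alpha_\lambda=|\stem(\bigcap_{\xi<\lambda}T_\xi)|$ coincides with your $\ell_\lambda$, and both arguments let the first mover at each limit stage fix the bit $x(\alpha_\lambda)$ and then invoke that $\{\alpha_\lambda:\lambda<\kappa\text{ limit}\}$ is club. You supply more detail than the paper does---in particular the verification that $\ell_\lambda\in\level(S_\lambda)$ via closedness of the splitting-level sets and the $<\kappa$-closure of $\clubsilver$---but the underlying idea is identical.
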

\begin{proof}
We recursively construct the winning strategy of II in $G_2(\club)$ as follows: we only take care of limit steps $\lambda$: if $\langle T^1_\xi, T^2_\xi: \xi < \lambda \rangle$ is the partial play, then II chooses $T^2_\lambda \preceq \bigcap_{\xi < \lambda} T^2_\xi$ so that for $ \alpha_\lambda:= |\stem \big( \bigcap_{\xi < \lambda} T_\xi \big )|$, one has 
\begin{equation} \label{eq1}
|\stem(T^2_\lambda)| > \alpha_\lambda \land \stem(T^2_\lambda)(\alpha_\lambda)=0. 
\end{equation}
Note that one can make such a choice since $\stem (\bigcap_{\xi < \kappa} T_\xi)$ is a splitting node. Let us call $\sigma$ such a strategy for player II. For every $T^1_*:=\langle T^1_\xi: \xi < \kappa \rangle$ play of I, one has that the output produced by $\sigma (T^1_*)$ is not in $\club$, since the set of $\{ \alpha_\lambda : \lambda < \kappa \text{ limit ordinal} \}$ is closed unbounded.

To check the second assertion, we can analogously build the winning strategy $\tau$ for player I in $G_1(\club)$. Player I chooses first at limit steps $\lambda$, and so, in (\ref{eq1}), we can freely choose $\stem(T^1_\lambda)(\alpha_\lambda)=1$. In such a way, for every $T^2_* :=\langle T^2_\xi: \xi < \kappa \rangle$ play of II, one has the output produced by $\tau(T_*^2)$ is in $\club$.
\end{proof}

An interesting issue might be to switch the point of view in the following sense. Define $X \subseteq 2^\kappa$ to be $G_i$-measurable iff $G_i(X)$ is determined. 
By Lemma \ref{lemma:G1-G2}, the club filter $\club$ is measurable in both cases.

\begin{itemize}
\item[]\textsf{Question}. Can we force  all $\On^\kappa$-definable sets to be $G_i$-measurable? Or, in other words, can one find a model where $G_i$'s are determined for all $\On^\kappa$-definable sets?
\end{itemize}

\section{Full-Miller measurability} \label{miller}

In this section, we prove that $\levy(\kappa, < \lambda)$ forces that all $\On^\kappa$-definable subsets of $\kappa^\kappa$ are $\fullmiller$-measurable. We assume $2^\kappa=\kappa^+$.
Consider the forcing $\poset{MT}$ introduced in section \ref{perfect-cohen}, for adding a full-Miller tree of Cohen reals.

\begin{claim} \label{claim2}
$\poset{MT}$ is forcing-equivalent to $\levy(\kappa,2^\kappa)$.
\end{claim}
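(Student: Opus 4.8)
The plan is to show that $\poset{MT}$ is forcing-equivalent to $\levy(\kappa, 2^\kappa)$ by verifying the standard criterion for recognizing a Levy collapse, namely that $\poset{MT}$ is a $<\kappa$-closed separative poset of size $2^\kappa$ which collapses $2^\kappa$ to $\kappa$ and is \emph{homogeneous} enough that below every condition it looks the same (more precisely, every condition has $2^\kappa$-many pairwise incompatible extensions, giving the needed regularity). The cleanest route is to invoke the classical characterization: a $<\kappa$-closed notion of forcing is equivalent to $\levy(\kappa,\mu)$ provided it has a dense subset which is a tree of height $\kappa$ in which every node has exactly $\mu$-many immediate successors (or more flexibly, every condition splits into $\mu$-many incompatible extensions at each relevant stage), together with $|\poset{MT}| = \mu = 2^\kappa$ under the hypothesis $2^\kappa = \kappa^+$.

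First I would check $<\kappa$-closure. Given a descending sequence $\langle p_\iota : \iota < \delta \rangle$ with $\delta < \kappa$, each $p_\iota$ is of the form $T_\iota[\alpha_\iota]$ for some $T_\iota \in \fullmiller$ and $\alpha_\iota < \kappa$; since the ordering is by end-extension and $\kappa$ is regular with $\kappa^{<\kappa}=\kappa$, the union $\bigcup_\iota p_\iota$ is again an initial piece $T[\alpha]$ of a full-Miller tree, hence a lower bound in $\poset{MT}$. This uses that full-Miller trees have \emph{all} $\kappa$-many successors at splitting nodes, so the union of end-extensions remains full-Miller. Second I would compute the size: each condition is an initial segment $T[\alpha]$ which is a subset of $\kappa^{<\kappa}$ of size $<\kappa$, so $|\poset{MT}| \leq (\kappa^{<\kappa})^{<\kappa} = \kappa^{<\kappa} = \kappa$ pieces times the $2^\kappa$-many ways to... — here I must be careful, since actually the number of conditions is $2^\kappa$ because a single $T[\alpha]$ can branch into $\kappa$-many successors at each of $<\kappa$-many splitting nodes, giving at most $\kappa^\kappa = 2^\kappa$ (using $2^\kappa = \kappa^+ = \kappa^\kappa$). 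So $|\poset{MT}| = 2^\kappa$.

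Third, and most importantly, I would show $\poset{MT}$ collapses $2^\kappa$ to $\kappa$ and has the uniform branching needed. The generic object $T_G = \bigcup\{p : p \in G\}$ is a full-Miller tree, and by Lemma \ref{lemma:generic-miller} its branches are Cohen over $\model{N}$; more relevantly, at each splitting node of $T_G$ all $\kappa$-many successors appear, and the conditions record ever more splitting levels. The key density fact is that below any $p = T[\alpha]$ one can extend to $T'[\alpha+1]$ in $2^\kappa$-many mutually incompatible ways (choosing different cofinal behavior of the generic tree), so that $\poset{MT}$ below any condition has $2^\kappa$ maximal antichains witnessing a surjection from $\kappa$ onto $(2^\kappa)^{\model{N}}$. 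I would then cite the folklore theorem (the $\kappa$-analogue of the fact that any $<\kappa$-closed, separative, homogeneous poset of size $\mu$ collapsing $\mu$ to $\kappa$ with $\mu^{<\kappa}=\mu$ is isomorphic on a dense set to $\levy(\kappa,\mu)$) to conclude $\poset{MT} \cong \levy(\kappa, 2^\kappa)$.

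The main obstacle I anticipate is the density/branching bookkeeping in the third step: one must verify precisely that every condition has exactly (up to density) $2^\kappa$-many incompatible one-step extensions and that the resulting tree of conditions satisfies the homogeneity hypotheses of the abstract characterization of $\levy(\kappa,\mu)$, rather than merely collapsing $2^\kappa$. In particular I would need to confirm that the end-extension ordering on $\poset{MT}$ yields a \emph{separative} poset and that the generic surjection $\kappa \twoheadrightarrow (2^\kappa)^{\model{N}}$ is genuinely produced (which follows from the fact that the $\kappa$-many successors at each splitting node, ranging over all of $\kappa$ by the full-Miller requirement, together with the $<\kappa$-support accumulation, enumerate enough information to collapse $2^\kappa$). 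Verifying the exact hypotheses of whichever uniform-collapse characterization one invokes — and checking the arithmetic $2^\kappa = \kappa^\kappa$ under the standing assumption $2^\kappa = \kappa^+$ — is where the real care is required.
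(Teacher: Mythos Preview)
Your overall strategy matches the paper's exactly: verify $<\kappa$-closure, compute the size as $2^\kappa$, show that $2^\kappa$ is collapsed to $\kappa$, and invoke the standard characterization of $\levy(\kappa,2^\kappa)$. Two corrections are worth making, though.

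First, your size computation contains an error: a condition $p \sqsupseteq T[\alpha]$ is \emph{not} of size $<\kappa$. Since $T$ is full Miller, every splitting node already has all $\kappa$ immediate successors in $T$, so even $T[1]$ has $\kappa$ terminal nodes. What is true is that each condition has size $\leq\kappa$ (using $\kappa^{<\kappa}=\kappa$), hence there are at most $2^\kappa$ subsets of $\kappa^{<\kappa}$ of this kind; the lower bound is clear. So the conclusion $|\poset{MT}|=2^\kappa$ survives, but not via the sentence you wrote.

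Second, the paper's collapsing argument is more direct than your ``$2^\kappa$ pairwise incompatible one-step extensions plus homogeneity'' route, and it bypasses the homogeneity verification you flag as an obstacle. The paper observes that for every $A=\{a_\xi:\xi<\kappa\}\in\model{N}$ coding an element of $(2^\kappa)^{\model{N}}$, the set
\[
D_A=\{\sigma\in\poset{MT}:\exists t\in\splitting(\sigma)\ \forall\xi<\kappa\ (t^\conc\xi^\conc a_\xi\in\sigma)\}
\]
is open dense, so in the extension the map $t\mapsto\{\alpha:\exists\xi\,(t^\conc\xi^\conc\alpha\in T_G)\}$ sends $\splitting(T_G)$ (a set of size $\kappa$) onto $(2^\kappa)^{\model{N}}$. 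With $<\kappa$-closure, size $2^\kappa$, and this collapse in hand, the folklore characterization applies immediately; homogeneity and separativity need not be checked separately.
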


\begin{proof}
$\poset{MT}$ is clearly $< \kappa$-closed and has size $2^\kappa$. Moreover, $\poset{MT}$ collapses $2^\kappa$ to $\kappa$; in fact, for every  $A :=\{ a_\xi:\xi < \kappa \} \subseteq \kappa$ of size $\kappa$, $A \in \model{N}$, the set
\[
D_A :=  \{ \sigma \in \poset{MT}: \exists t \in \splitting(\sigma) \forall \xi < \kappa (t^\conc \xi^\conc a_\xi \in \sigma ) \}
\]
is open dense. Hence the function $H: \splitting(T_G) \rightarrow 2^\kappa \cap \model{N}$ defined by $H(t):= \{ \alpha: \exists \xi < \kappa (t^\conc \xi^\conc \alpha) \}$ is surjective, and so $2^\kappa \cap \model{N}$ collapses to $\kappa$. 

$\poset{MT}$ is then $< \kappa$-closed, of size $2^\kappa$, collapsing $2^\kappa$ to $\kappa$, and hence equivalent to $\levy(\kappa,2^\kappa)$.
\end{proof}

\begin{claim} \label{claim-miller1}
Let $\poset{Q}= \levy(\kappa, <\lambda)$, and let $\dot T$, $\dot x$ be $\poset{MT}*\poset{Q}$-names for the full-Miller generic tree added by $G(0)$ and a branch of $[\dot T]$, respectively. There exists $\poset{MT}_0 * \poset{P} \subseteq \poset{MT}*\poset{Q}$ dense subposet such that for every $(\sigma,\dot p) \in \poset{MT}_0*\poset{P}$ there exists $t \in \term(\sigma)$ such that $x_{(\sigma,\dot p)}=t$, where $x_{(\sigma,\dot p)}$ is the initial segment of $\dot x$ decided by $(\sigma,\dot p)$.
\end{claim}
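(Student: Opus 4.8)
The plan is to transport the strategy of Claims \ref{silver-claim1} and \ref{silver-claim2} to the two-step poset $\poset{MT}*\poset{Q}$. I would set
\[
\poset{MT}_0*\poset{P}:=\{(\sigma,\dot p)\in\poset{MT}*\poset{Q}: (\sigma,\dot p)\force \dot x\in[\dot T]\ \text{and}\ x_{(\sigma,\dot p)}\in\term(\sigma)\},
\]
i.e.\ the conditions for which the already-decided initial segment of $\dot x$ is \emph{exactly} one of the terminal nodes of the tree part $\sigma$. As in the Silver development one may freely pass to conditions strong enough to force $\dot x\in[\dot T]$, and then $\sigma\sqsubset\dot T$ holds automatically since $\sigma$ belongs to the $G(0)$-generic. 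Everything then reduces to two statements: a \emph{no-overshoot} lemma (no condition decides $\dot x$ strictly beyond a terminal node of its tree part) and a \emph{push-up} lemma (below any condition one can decide $\dot x$ up to a terminal node). The former shows that membership in $\poset{MT}_0*\poset{P}$ is the best one can hope for; the latter gives density.

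For no-overshoot I would run the swapping argument of Claim \ref{silver-claim2}, which is in fact \emph{cleaner} here because full-Miller trees, unlike Silver trees, are not level-uniform, so a single terminal node can be redirected in isolation. Suppose towards a contradiction that $(\sigma,\dot p)\force t^\conc s\subset\dot x$ with $t\in\term(\sigma)$ and $s\neq\emptyset$. Since $t$ is terminal in $\sigma$, I would build $\sigma'\sqsupseteq\sigma$ (so $(\sigma',\dot p)\le(\sigma,\dot p)$) that extends $t$ by a \emph{single} step $t^\conc j$ with $j\neq s(0)$ and then grows into a legitimate element of $\poset{MT}$ above the remaining terminal nodes. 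In $\sigma'$ the node $t$ has the unique immediate successor $t^\conc j$, hence every branch of any $\dot T\sqsupseteq\sigma'$ through $t$ passes through $t^\conc j$; as $(\sigma',\dot p)$ still forces $t\subset\dot x$, it forces $t^\conc j\subset\dot x$. But it inherits $t^\conc s\subset\dot x$ as well, and $t^\conc j\perp t^\conc s$, a contradiction. Note that no passage to a ``starred'' dense set is required, precisely because redirecting $t$ does not disturb the other terminal nodes.

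For push-up I would argue that, keeping the tree part $\sigma$ fixed, one can decide $\dot x$ one splitting level at a time until it reaches a terminal node. If the decided part $y:=x_{(\sigma,\dot p)}$ is not already terminal, then, the path between consecutive splitting nodes being linear and hence forced, $y$ must be a splitting node of $\sigma$ at which the direction $\dot x(|y|)$ is undecided. The crucial point is that extending the tree part never pins such a direction — in a full-Miller condition all $\kappa$ successors of $y$ survive every end-extension — so the value $\dot x(|y|)$ can only be decided by strengthening the $\poset{Q}=\levy(\kappa,<\lambda)$ coordinate; doing so forces $y^\conc i\subset\dot x$ and the decided part jumps to the next splitting node of $\sigma$. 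Iterating over the $<\kappa$ splitting levels of $\sigma$, and taking $<\kappa$-closed lower bounds in the $\poset{Q}$-coordinate at limit stages (both $\poset{MT}$ and $\levy(\kappa,<\lambda)$ are $<\kappa$-closed, hence so is $\poset{MT}*\poset{Q}$), the decided part climbs to a successor of a top splitting node, i.e.\ to a terminal node of $\sigma$. Combined with no-overshoot this produces a condition of $\poset{MT}_0*\poset{P}$ below the arbitrary $(\sigma,\dot p)$, proving density.

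I expect the main obstacle to be justifying that the branch direction at a splitting node is genuinely decided by the Levy coordinate alone, with the tree part left untouched: this is exactly what prevents the growth of $\sigma$ and the advance of the decided part from chasing each other indefinitely (the phenomenon that forces the ``starred'' detour and the auxiliary Claim \ref{silver-claim1} in the Silver case). I would establish it from the homogeneity of full-Miller splitting — the automorphisms of $\poset{MT}$ below $\sigma$ permuting the $\kappa$ subtrees hanging above $y$ act transitively on the successors of $y$, so no end-extension of the tree part can break the symmetry and decide $\dot x(|y|)$. The secondary, bookkeeping obstacle is the usual one for two-step iterations, namely checking at limit stages that the lower bounds are legitimate $\poset{MT}*\poset{Q}$-conditions and that $\dot p$ remains a legitimate $\poset{MT}$-name throughout the climb.
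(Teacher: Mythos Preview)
Your no-overshoot argument is correct and, as you observe, cleaner than the route the paper takes: once the terminal node $t$ is redirected through a single successor $t^\conc j$ with $j\neq s(0)$, the end-extension order freezes the successor set of $t$ in $\dot T$, so $(\sigma',\dot p)$ already forces $t^\conc j\subset\dot x$ and the contradiction is immediate. The paper instead extends $t$ to a full splitting node $t_0\perp t^\conc s$ and then appeals to the density part to reach a terminal node of the new tree; your shortcut legitimately avoids that detour.

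The push-up argument, however, has a real gap at precisely the point you flag as the main obstacle. The homogeneity justification does not go through: an automorphism $\pi$ of $\poset{MT}$ below $\sigma$ permuting the subtrees above $y$ transforms the statement $(\sigma',\dot p)\Vdash\dot x(|y|)=i$ into $(\pi(\sigma'),\pi(\dot p))\Vdash\pi(\dot x)(|y|)=i$, and since $\dot x$ is an \emph{arbitrary fixed} $\poset{MT}*\poset{Q}$-name there is no reason for $\pi(\dot x)=\dot x$. Worse, the underlying claim is false. Take $\dot x$ to be a pure $\poset{MT}$-name ---  for instance the branch of $\dot T$ that at $y$ chooses direction $0$ or $1$ according to the parity of the height of the first splitting node of $\dot T$ above $y^\conc 0$; then $\dot x(|y|)$ depends only on the tree coordinate, and no strengthening of the $\poset{Q}$-coordinate with $\sigma$ fixed can decide it. So the decided part and the tree part genuinely do chase each other, and the paper resolves this exactly by the $\omega$-chase you hoped to sidestep: one builds $(\sigma_{n+1},\dot p_{n+1})\le(\sigma_n,\dot p_n)$ with $x_{(\sigma_{n+1},\dot p_{n+1})}\supseteq t_n$ for some $t_n\in\term(\sigma_n)$, allowing \emph{both} coordinates to move, and at stage $\omega$ the terminal nodes and the decided segment coalesce.
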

\begin{proof}
First of all, we want to prove an analogue of Claim \ref{silver-claim1}. More precisely, we want to prove that  the set of conditions $(\sigma,\dot p)$ for which there exists $t \in \term(\sigma)$ such that $t \subseteq x_{(\sigma,\dot p)}$ is dense in $\poset{MT}* \poset{Q}$. To this aim, we start from a condition $(\sigma_0, \dot p_0)$ and we inductively build $(\sigma_{n+1},\dot p_{n+1}) \leq (\sigma_n, \dot p_n)$ such that there exists $t_{n} \in \term(\sigma_n)$ such that $x_{(\sigma_{n+1},\dot p_{n+1})} \supseteq t_n$. Then put $\sigma= \bigcup_{n \in \omega} \sigma_n$, pick $\dot p$ such that $\sigma \force \dot p \leq \dot p_n$ for all $n \in \omega$, and put $w= \bigcup_{n \in \omega} t_n$. By construction, $w \in \term(\sigma)$ and $x_{(\sigma, \dot p)} \supseteq w$, as $(\sigma,\dot p) \leq (\sigma_n,\dot p_n)$, for all $n \in \omega$. 

The second part is an analogue of the proof of Claim \ref{silver-claim2}, i.e., we want to show that if $x_{(\sigma,\dot p)} \supseteq t$, for some $t \in \term(\sigma)$, none of the extensions of $t$ can be ruled out, and so $t=x_{(\sigma,\dot p)}$.  By contradiction, assume $x_{(\sigma,p)}=t^\conc s$, for some $t \in \term(\sigma)$ and non-empty $s \in 2^{<\kappa}$. Let $\sigma'$ be the downward closure of $\sigma \cup \bigcup\{{t_0}^\conc \xi: \xi \in \kappa \}$, for some $t_0 \perp  t^\conc s$ with $t_0 \supset t$. Then pick $(\sigma'', \dot q) \leq (\sigma', \dot p)$ such that there exists $t_1 \in \term(\sigma'')$ such that $t_1 \subseteq x_{(\sigma'', \dot q)}$.
Hence, one has   $x_{(\sigma'',\dot q)} \supseteq t_1 \supseteq t_0 \perp x_{(\sigma, \dot p)}$, contradicting $(\sigma'', \dot q)\leq (\sigma, \dot p)$.
\end{proof}

With a similar construction, we can get an analogue of Claim \ref{silver-claim3} and Corollary \ref{cor:dense-cohen} as well.

\begin{claim} \label{quotient-miller}
Let $G$ be $\poset{MT}_0*\poset{P}$-generic over $\model{N}$. Let $\dot T$ be the canonical name for the generic Miller tree added by $G(0)$, $\dot x$ an $\poset{MT}_0*\poset{P}$-name for a branch in $\dot T$, and $z=\dot x^G$. Then $\poset{MT}_0*\poset{P} / \dot x=z$ is forcing-equivalent to $\poset{MT}_0*\poset{P}$ (and hence to $\levy(\kappa,<\lambda)$).
\end{claim}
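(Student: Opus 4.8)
The plan is to run the same argument as in the proof of Lemma~\ref{quotient-lemma}, with Claims~\ref{silver-claim1}--\ref{silver-claim3} and Corollary~\ref{cor:dense-cohen} replaced by their $\poset{MT}_0*\poset{P}$-analogues (available by Claim~\ref{claim-miller1} and the remark immediately following it), and then to identify the resulting quotient with $\levy(\kappa,<\lambda)$ via the standard uniqueness theorem for the Lévy collapse. As in the Silver case, I would first record the recursive description of the quotient computed in $\model{N}[z]$:
\[
\begin{split}
A_0 &:= \{(\sigma,\dot p)\in\poset{MT}_0*\poset{P}: \exists \xi<\kappa\,((\sigma,\dot p)\force \dot x(\xi)\neq z(\xi))\},\\
A_{\beta+1} &:= \{(\sigma,\dot p): \exists D\subseteq A_\beta \text{ open dense below }(\sigma,\dot p),\ D\in\model{N}\},\\
A_\eta &:= \textstyle\bigcup_{\beta<\eta}A_\beta \quad\text{for $\eta$ limit},
\end{split}
\]
with $\gamma$ least such that $A_\gamma=A_{\gamma+1}$, so that $\poset{MT}_0*\poset{P}/\dot x{=}z = (\poset{MT}_0*\poset{P})\setminus\bigcup_{\beta<\gamma}A_\beta$. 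By Claim~\ref{claim-miller1}, for every condition $x_{(\sigma,\dot p)}$ is a terminal node of $\sigma$; hence $(\sigma,\dot p)\in A_0$ if and only if $x_{(\sigma,\dot p)}\not\subset z$.

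Next I would show $\gamma=0$, exactly as in Lemma~\ref{quotient-lemma}. Suppose toward a contradiction that $(\sigma,\dot p)\in A_1\setminus A_0$, witnessed by $D\subseteq A_0$ open dense below $(\sigma,\dot p)$ with $D\in\model{N}$. By the analogue of Corollary~\ref{cor:dense-cohen}, the set $W:=\{x_{(\sigma',\dot p')}: (\sigma',\dot p')\in D,\ (\sigma',\dot p')\leq(\sigma,\dot p)\}$ is dense in $\cohen$ below $x_{(\sigma,\dot p)}$; since $(\sigma,\dot p)\notin A_0$ we have $x_{(\sigma,\dot p)}\subset z$, and since $z$ is Cohen over $\model{N}$ (Remark~\ref{true-amoeba}) there is $(\sigma',\dot p')\in D$ with $x_{(\sigma',\dot p')}\subset z$. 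But $D\subseteq A_0$ gives $x_{(\sigma',\dot p')}\not\subset z$, a contradiction. Hence $\gamma=0$ and
\[
\poset{MT}_0*\poset{P}/\dot x{=}z = \{(\sigma,\dot p)\in\poset{MT}_0*\poset{P}: x_{(\sigma,\dot p)}\subset z\}=:\poset{R},
\]
which is $<\kappa$-closed, since the greatest lower bound of a decreasing $<\kappa$-sequence of conditions satisfying $x_{(\sigma,\dot p)}\subset z$ again satisfies this constraint.

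The main work is then to see that $\poset{R}$ is forcing-equivalent to $\levy(\kappa,<\lambda)$ rather than merely $<\kappa$-closed; this is exactly where the Miller case genuinely differs from the Silver one, in which it sufficed to note that a non-distributive $<\kappa$-closed forcing of size $\kappa$ is $\cohen$. I would invoke the standard characterisation: for $\lambda$ inaccessible and $\kappa^{<\kappa}=\kappa$, any $<\kappa$-closed, $\lambda$-cc forcing of size $\lambda$ forcing $\lambda=\kappa^+$ is forcing-equivalent to $\levy(\kappa,<\lambda)$. Writing $\poset{MT}_0*\poset{P}\cong\cohen*\dot{\poset{R}}$, where the first factor is the complete subforcing generated by $\dot x$ (equivalent to $\cohen$, of size $\kappa<\lambda$) and $\dot{\poset{R}}$ is the above quotient, all four hypotheses transfer to $\poset{R}$ over $\model{N}[z]$: it is $<\kappa$-closed; it is $\lambda$-cc, being the quotient of the $\lambda$-cc forcing $\poset{MT}_0*\poset{P}\cong\levy(\kappa,<\lambda)$ by the $\lambda$-cc complete subforcing $\cohen$; it has size $\lambda$; and it still collapses every $\mu\in(\kappa,\lambda)$ to $\kappa$ and forces $\lambda=\kappa^+$, since passing from $\model{N}$ to $\model{N}[z]$ changes neither the cardinals nor the poset $\levy(\kappa,<\lambda)$ itself (the real $z$ is added by the $<\kappa$-closed forcing $\cohen$, so $\model{N}$ and $\model{N}[z]$ have the same $<\kappa$-sequences of ordinals). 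The most delicate point is precisely this last verification — that restricting the tree coordinate to branches through $z$ does not destroy the collapsing power witnessed by the dense sets $D_A$ of Claim~\ref{claim2} — after which the characterisation yields $\poset{R}\cong\levy(\kappa,<\lambda)^{\model{N}[z]}=\levy(\kappa,<\lambda)^{\model{N}}\cong\poset{MT}_0*\poset{P}$, as required.
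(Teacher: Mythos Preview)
Your argument is correct, and the first half---reducing the quotient to $\poset{R}=\{(\sigma,\dot p):x_{(\sigma,\dot p)}\subset z\}$ via the $A_\beta$-recursion and the analogue of Corollary~\ref{cor:dense-cohen}---is exactly what the paper does. Where you diverge is in identifying $\poset{R}$ with $\levy(\kappa,<\lambda)$: the paper proceeds by an explicit decomposition $\poset{R}\cong\poset{P}_0*\dot{\poset{P}}_1$, where $\poset{P}_0$ is the projection of $\poset{R}$ onto the $\poset{MT}_0$-coordinate (shown directly to be $<\kappa$-closed and to collapse $2^\kappa$ to $\kappa$, via the dense sets $D_A$ of Claim~\ref{claim2}), and $\dot{\poset{P}}_1$ is the projection onto the $\poset{P}$-coordinate (shown to densely embed into $\levy(\kappa,<\lambda)$ by a hand-built map $e$). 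You instead invoke the abstract characterisation of $\levy(\kappa,<\lambda)$ as the unique $<\kappa$-closed, $\lambda$-cc poset of size $\lambda$ forcing $\lambda=\kappa^+$, and verify the hypotheses by transfer through the factorisation $\poset{MT}_0*\poset{P}\cong\cohen*\dot{\poset{R}}$. This is cleaner and avoids the somewhat ad hoc construction of $e$; note in particular that your worry about the ``delicate point'' is already dissolved by your own argument---the collapsing of $(\kappa,\lambda)$ is inherited automatically from the full extension $\model{N}[G]=\model{N}[z][H]$ and the fact that $\cohen$ preserves cardinals, so you need not revisit the $D_A$'s at all. The paper's route, by contrast, yields the finer structural information that the quotient itself splits as a two-step iteration mirroring the original $\poset{MT}_0*\poset{P}$.
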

\begin{proof}
Use the notation $(\poset{MT}_0*\poset{P})_z := \poset{MT}_0*\poset{P} / \dot x = z$. 
Claim \ref{claim-miller1}, together with the analogues of Claim \ref{silver-claim3} and Corollary \ref{cor:dense-cohen}, gives the same argument as in the proof of Lemma \ref{quotient-lemma}, and so we can obtain
\[
(\poset{MT}_0*\poset{P})_z = \{ (\sigma, \dot p) \in \poset{MT}_0*\poset{P}: x_{(\sigma,\dot p)} \subset z  \}.
\]
We work in $\model{N}[z]$. Note that 
\[
(\poset{MT}_0*\poset{P})_z =\{ (\sigma, \dot p) \in \poset{MT}_0*\poset{P}: \exists t \in \term(\sigma) (t \subset z \land x_{(\sigma,\dot p)}=t)\}.
\]

$\subseteq$: clearly, if $\forall t \in \term(\sigma) (t \not \subset z)$, then $(\sigma,\dot p) \notin (\poset{MT}_0*\poset{P})_z$, as $(\sigma,\dot p) \force \sigma \sqsubset \dot T$. 
$\supseteq$: if there exists $t \in \term(\sigma)$ such that $t \subset z$, then $x_{(\sigma, \dot p)}= t\subset z$.

First, we prove that 
\begin{equation} \label{eq2}
\poset{P}_0:= \{ \sigma \in \poset{MT}_0: \exists t \in \term(\sigma) \exists \dot p \in \poset{P} (t \subset z \land x_{(\sigma,\dot p)}=t)\}
\end{equation}
 is $<\kappa$-closed and collapses $2^\kappa$ to $\kappa$, and so it is equivalent to $\poset{MT}_0$.  
Let $\{ \sigma_\alpha: \alpha < \delta \}$, for $\delta < \kappa$, be a decreasing sequence of conditions in $\poset{P}_0$, and for every $\alpha < \delta$, let $t_\alpha \in \term(\sigma_\alpha)$ be such that $t_\alpha \subset z$ and $\dot p_\alpha \in \poset{P}$ such that $x_{(\sigma_\alpha, \dot p_\alpha)}=t_\alpha$. Then put $\sigma_\delta= \bigcup_{\alpha<\delta} \sigma_\alpha$, $t_\delta= \bigcup_{\alpha < \delta} t_\alpha$ and pick $\dot p_\delta \in \poset{P}$ such that $\sigma_\delta \force \forall \alpha < \delta (\dot p_\delta \leq \dot p_\alpha)$. Hence, $t_{\delta} \in \term(\sigma_\delta)$, $t_{\delta} \subset z$ and $t_\delta=x_{(\sigma_\delta, \dot p_\delta)}$, which means $\sigma_\delta \in \poset{P}_0$. Hence, the poset is $<\kappa$-closed. The proof that it also collapses $2^\kappa$ to $\kappa$ is the same as the one given for Claim \ref{claim2}, since the sets $D_A$'s are dense in $\poset{P}_0$ as well; simply, for every $\sigma \in \poset{P}_0$, pick $t \in \term(\sigma)$ such that $t \perp x_{(\sigma,\dot p)}$, for some $\dot p \in \poset{P}$, and then let $\sigma' \leq \sigma$ be the downward closure of $\sigma \cup \bigcup \{t^\conc \xi^\conc a_\xi: \xi \in \kappa  \}$, where $A:= \{ a_\xi:\xi \in \kappa \}$.

Secondly, define
\begin{equation} \label{eq3}
\dot{\poset{P}}_1:= \{\dot p \in \poset{P}: \exists \sigma \in \poset{MT}_0 ((\sigma, \dot p) \in (\poset{MT}_0*\poset{P})_z)  \}.
\end{equation}
 Let $H$ be an arbitrary $\poset{MT}_0$-generic filter over $\model{N}[z]$. Work in $\model{N}[z][H]$. Then $\levy(\kappa, <\lambda)$ is equivalent to $\poset{P}_1$. Indeed, first note that, the argument used in the second part of the proof of Claim \ref{claim-miller1} actually gives the following: if $\dot p, \dot q \in \poset{P}$ are such that $\sigma \force \dot q \leq \dot p$ and $(\sigma,\dot p) \in  (\poset{MT}_0*\poset{P})_z$, then $x_{(\sigma,\dot p)}= x_{(\sigma, \dot q)}$, and so $(\sigma, \dot q) \in (\poset{MT}_0*\poset{P})_z$ as well. (if we drop the assumption $\sigma \force \dot q \leq \dot p$, the only thing that we can say in general is that $\exists t_0 \in \term(\sigma)$ such that $x_{(\sigma, \dot p)}=t_0$ and $\exists t_1 \in \term(\sigma)$ such that $x_{(\sigma, \dot q)}=t_1$, but $t_0$ and $t_1$ might be different). Furthermore, let $\{ p_\xi: \xi < \delta \}$, for $\delta \leq \lambda$, be the set of minimal conditions in $\poset{P}_1$ (i.e., there is no $q \geq p_\xi$ and $q \neq p_\xi$ such that $q \in \poset{P}_1$); we can build a partial function $e : \levy(\kappa,<\lambda) \rightarrow \levy(\kappa, <\lambda)$, satisfying:
\begin{itemize}
\item for every $\xi < \delta$, for all $\alpha_0 \in \lambda$ and $\beta_0, \mu_0 \in \kappa$, there are $\alpha_0' \in \lambda$ and $\beta_0', \mu_0' \in \kappa$ such that, 
\[
e(p_\xi \cup \{((\alpha_0,\beta_0),\mu_0)  \})= \{ ((\alpha'_0,\beta_0'), \mu'_0) \};
\]
\item for all $\alpha_0' \in \lambda$ and $\beta_0', \mu_0' \in \kappa$, there are $\xi_0 < \delta$, $\alpha_0 \in \lambda$ and $\beta_0, \mu_0 \in \kappa$ such that
\[
e(p_\xi \cup \{((\alpha_0,\beta_0),\mu_0)  \})= \{ ((\alpha'_0,\beta_0'), \mu'_0) \};
\]
\item let $q_0:= p_{\xi_0} \cup \{((\alpha_0,\beta_0),\mu_0)$ and $q_1:= p_{\xi_1} \cup \{((\alpha_1,\beta_1),\mu_1)$. Then $q_1 \leq q_0 \Rightarrow e(q_1) \leq e(q_0)$ and $q_1 \perp q_0 \Rightarrow e(q_0) \perp e(q_1)$;
\item let $\poset{P}_2:= \poset{P}_1 \setminus \{p_\xi:\xi < \delta \}$; then $e | \poset{P}_2 : \poset{P}_2 \rightarrow \levy(\kappa, <\lambda)$ is a dense embedding.
\end{itemize} 
This $e$ can be constructed by a pretty standard argument, simply by following a bijection $\delta \times \lambda \times \kappa \times \kappa \leftrightarrow \lambda \times \kappa \times \kappa$, and by using the homogeneity of $\levy(\kappa, <\lambda)$.

Hence, (\ref{eq2}) and (\ref{eq3}) give: $(\poset{MT}_0*\poset{P})_z \cong \poset{P}_0 * \dot{\poset{P}}_1 \cong \levy(\kappa, <\lambda)$.
\end{proof}

\begin{lemma} \label{lemma1:miller}
Let $\lambda$ be inaccessible greater than $\kappa$, and let $G$ be $\levy(\kappa, < \lambda)$-generic over $\model{N}$. Then 
\[
\model{N}[G] \models \text{`` all $\On^\kappa$-definable subsets of $\kappa^\kappa$ are $\fullmiller$-measurable ''.}
\]
\end{lemma}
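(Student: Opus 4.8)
The plan is to transpose the Solovay-style argument of Lemma \ref{lemma:projective-silver-meas} from $(\cohen_{\kappa^+},\statsilver)$ to $(\levy(\kappa,<\lambda),\fullmiller)$, feeding in the good-quotient factoring established in Claim \ref{quotient-miller} in place of Lemma \ref{quotient-lemma}. First I would reduce the statement to \emph{weak} $\fullmiller$-measurability: since the $\On^\kappa$-definable subsets of $\kappa^\kappa$ are closed under intersection with closed sets and under continuous preimages, the equivalence from \cite{FKK14} recalled in Section \ref{section:introduction} shows that it suffices to produce, for each such $X$, some $T\in\fullmiller$ with $[T]\subseteq X$ or $[T]\cap X=\emptyset$.

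Next I would rewrite the forcing so that a generic full Miller tree of Cohen branches is explicitly present. By Claim \ref{claim2} and the hypothesis $2^\kappa=\kappa^+$ one has $\poset{MT}\cong\levy(\kappa,\kappa^+)$; as $\lambda$ is inaccessible above $\kappa^+$, the Levy collapse absorbs this factor, so $\poset{MT}*\levy(\kappa,<\lambda)\cong\levy(\kappa,<\lambda)$, and by Claim \ref{claim-miller1} the dense subposet $\poset{MT}_0*\poset{P}$ gives $\levy(\kappa,<\lambda)\cong\poset{MT}_0*\poset{P}$. I may therefore take $G$ to be $\poset{MT}_0*\poset{P}$-generic, so that $\model{N}[G]$ contains the generic full Miller tree $T_G$ added by $G(0)$; every branch of $T_G$ is Cohen over $\model{N}$ --- by Remark \ref{true-amoeba} applied to the tree-adding coordinate $\poset{MT}_0$ together with the $<\kappa$-closed tail $\poset{P}$ --- and has good quotient by Claim \ref{quotient-miller}. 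Concretely, for a branch $z\in[T_G]$ with name $\dot x$, writing $Q_{\dot x}$ for the $\cohen$-equivalent forcing generated by $\dot x$, one has $\levy(\kappa,<\lambda)\cong Q_{\dot x}*\dot R$ with $\model{N}[z]\models\dot R^z\cong\levy(\kappa,<\lambda)$.

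Now fix $X=\{y:\varphi(y,v)\}$ with $v\in\On^\kappa$; after absorbing $v$ into an intermediate model $\model{N}[G\restric A]$ for a suitable $A$ of size $<\lambda$ and using that the corresponding tail of $\levy(\kappa,<\lambda)$ is again $\levy(\kappa,<\lambda)$ (with $\lambda$ still inaccessible), I may assume $v\in\model{N}$. For a Cohen $z$ with good quotient, weak homogeneity of $\dot R^z\cong\levy(\kappa,<\lambda)$ yields the dichotomy $\model{N}[z]\models\force_{\dot R^z}\varphi(z)$ or $\model{N}[z]\models\force_{\dot R^z}\neg\varphi(z)$. Considering the generic branch $x=\dot x^G$ I may assume, as in the Silver case, that $\model{N}[x]\models\theta(x)$, where $\theta(x)$ abbreviates ``$\force_{\dot R^x}\varphi(x)$'', the opposite case being symmetric by homogeneity. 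Choose $s\in\cohen$ with $s\force\theta(\dot x)$; then $s\subset x\in[T_G]$, so $s\in T_G$ and $(T_G)_s:=\{t\in T_G:t\subseteq s\vee t\supseteq s\}$ is again a full Miller tree, all of whose branches are good Cohen branches extending $s$. For each such branch $z$ the forcing theorem for $\cohen$ gives $\model{N}[z]\models\theta(z)$, i.e.\ $\force_{\dot R^z}\varphi(z)$; good quotient then supplies an $\dot R^z$-generic $H$ over $\model{N}[z]$ with $\model{N}[z][H]=\model{N}[G]$, whence $\model{N}[G]\models\varphi(z)$. Thus $[(T_G)_s]\subseteq X$, and the symmetric case produces $T$ with $[T]\cap X=\emptyset$.

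The step I expect to be the genuine obstacle is the factoring underpinning everything, namely Claim \ref{quotient-miller}: that the quotient $\levy(\kappa,<\lambda)/\dot x{=}z$ is uniformly forcing-equivalent to $\levy(\kappa,<\lambda)$ for every branch, so that the reconstruction $\model{N}[z][H]=\model{N}[G]$ is available along an entire full Miller tree at once. Granting that, the remaining points are routine: one need only check that $(T_G)_s$ is genuinely full Miller --- its splitting nodes above $s$ retain all $\kappa$ immediate successors inherited from $T_G$ --- and that $\theta$ is evaluated identically along every Cohen generic extending $s$, both immediate from the definitions.
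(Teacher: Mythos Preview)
Your proposal is correct and follows essentially the same Solovay-style route as the paper: absorb parameters by $\lambda$-cc, factor $\levy(\kappa,<\lambda)$ as $\poset{MT}*\levy(\kappa,<\lambda)$ via Claim~\ref{claim2}, use Claim~\ref{quotient-miller} for the good-quotient property of branches, split on $\force_{\dot R^x}\varphi(x)$ by homogeneity, and push the resulting Cohen condition $s$ down to a full Miller tree of good Cohen branches. The only cosmetic difference is that you take the witnessing tree to be $(T_G)_s$ explicitly, whereas the paper just writes ``pick $T$ full-Miller tree of good Cohen branches with $\stem(T)=s$''; your choice is the natural instantiation and your verification that $(T_G)_s\in\fullmiller$ is exactly what is needed.
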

\begin{proof}
The argument is in strict analogy to the one of Lemma \ref{lemma:projective-silver-meas}, and we just give a sketch. Let $X \subseteq \kappa^\kappa$ be defined via some formula $\varphi$ whose parameters can be absorbed into the ground model $\model{N}$, by $\lambda$-cc. Let $T_0$ be the generic tree in $\fullmiller$ added by the first step, i.e., $T_0$ is associated with $G_0:= G \cap \levy(\kappa,\kappa^+)$. By Claim \ref{quotient-miller}, we know that each branch $x \in [T_0] \cap \model{N}[G]$ has good quotient, and so $\levy(\kappa,<\lambda)$ can be viewed as $\dot Q_x * \dot R$, where $\dot Q_x$ is the poset generated by $x$ and $\model{N}[x] \models \dot R^x \cong \levy(\kappa,<\lambda)$. 

Let $x$ be Cohen over $\model{N}$ with good quotient and assume $\model{N}[x] \models \text{``} \force_{\dot R^x} \varphi(x) \text{''}$. Work into $\model{N}[x]$; pick $s \in \kappa^{<\kappa}$ such that $s \force \text{``} \force_{\dot R^x} \varphi(x) \text{''}$ (here we are using $\cohen \cong (\kappa^{<\kappa}, \subset)$). Pick $T$ full-Miller tree of good Cohen branches with $\stem(T)=s$. Then proceed as in Lemma \ref{lemma:projective-silver-meas}: for every $z \in [T]$, $\model{N}[z] \models \text{``} \force_{\dot R^z} \varphi(z) \text{''}$, which implies there exists a $\levy(\kappa, < \lambda)$-generic filter $H$ over $\model{N}[z]$ with $\model{N}[z][H]=\model{N}[G]$, and so $\model{N}[G] \models \varphi(z)$, as $\dot R^z \cong \levy(\kappa,<\lambda)$.
The case $\model{N}[x] \models \text{``} \not \force_{\dot R^x} \varphi(x) \text{''}$ is analogous.
\end{proof}

\begin{remark}
Our result cannot be improved by replacing $T \in \fullmiller$ with trees having branches $z$ satisfying $\{\alpha<\kappa: y \restric \alpha \text{ is splitting}  \}$ being club. Indeed, a similar argument to the one presented in Lemma \ref{lemma:non-club-silver-meas} shows that $\club$ is not $\clubmiller$-measurable (see also \cite[Thm 2.12]{FKK14} for a more general approach). However it remains open whether one can get $\statmiller_\textsf{full}$-measurability for all $\On^\kappa$-definable subsets of $\kappa^\kappa$. 
\end{remark}

\begin{remark} \label{final-remark}
In \cite{LMS14}, the authors investigate two properties related to the Miller measurability: the Hurewicz dichotomy and a strengthening called the Miller tree Hurewicz dichotomy. These notions are related to the Miller measurability, but they are not in general equivalent. The authors of \cite{LMS14} prove that $\levy(\kappa,<\lambda)$ forces all $\On^\kappa$-definable sets to have the Hurewicz dichotomy. Furthermore, they prove that if $\kappa$ is not weakly compact, then the Miller tree Hurewicz dichotomy fails for closed sets, whereas that cannot be true for the Miller measurability because of Lemma \ref{lemma1:miller}. On the contrary, for $\kappa$ weakly compact, they prove that the two dichotomies are equivalent and they both imply the Miller measurability pointwise, but it is not clear which is the relation with the full-Miller measurability.
\end{remark}

\section{Open questions}
In section \ref{silver} we have proved that $\cohen_{\kappa^+}$ forces all $\On^\kappa$-definable sets to be stationary-Silver measurable, for $\kappa$ inaccessible. The latter assumption was essential in our proof to show that $\cohen$ adds a stationary Silver tree of Cohen branches. Therefore, the following question arises naturally.
\begin{itemize}
\item[Q.1] Can one force all $\On^\kappa$-definable sets to be stationary-Silver measurable, for $\kappa$ successor? 
\end{itemize}  
Even if not strictly necessary for a positive answer to Q.1, another issue strictly related is the following.
\begin{itemize}
\item[Q.2] Does $\cohen$ add a stationary Silver tree of Cohen branches even for $\kappa$ successor?
\end{itemize}
About Q.2, my intuition inclines to a negative answer.

Another interesting issue is the role of the inaccessible $\lambda$ concerning full-Miller measurability and Miller measurability.
\begin{itemize}
\item[Q.3] Can one force all $\On^\kappa$-definable sets to be Miller measurable without using inaccessible cardinals?  
\item[Q.4] What about the same question for full-Miller measurability instead?  
\end{itemize}
The key point here is that we do not have an analogous study in the classical setting; indeed, in the standard case, projective Baire property implies projective Miller measurability (and even projective full-Miller measurability) and so Shelah's amalgamation and sweetness provide us with a model for those notions of regularity without any need of an inaccessible. 
\emph{But}, in our generalized context, the Baire property fails for $\SSigma^1_1$, and hence we really need a direct method to get Miller measurability. A possible solution might be to consider an amoeba forcing adding a Miller tree of Cohen branches in a gentler way than $\levy(\kappa, 2^{\kappa})$, in order to get: 1) $\kappa^+$ will not be collapsed, and 2) one could obtain sufficiently many good Cohen branches.

The issue of separating different regularities classwise has been developed in the classical setting: in particular a method for separating Silver and Miller on all sets has been presented in \cite{L13}. A similar questions arises here.
\begin{itemize}
\item[Q.5] Can one force all sets to be Silver measurable but there exists a non-Miller measurable set?  
\end{itemize}
Finally, a last important research branch regards the $\DDelta^1_1$-level. In fact, because of the $\DDelta^1_1$-well ordering of $\model(\kappa^\kappa)^\model{L}$, one obtains $\DDelta^1_1$ non-regular sets in $\model{L}$. As a consequence, some arguments used in the standard setting for $\DDelta^1_2$ sets hold true for $\DDelta^1_1$ in the generalized context. The investigation of this topic has been initiated by Friedman, Khomskii and Kulikov in \cite{FKK14}. We also believe that this topic be strictly connected to the study of cardinal characteristics associated with the ideals generated by tree-forcings, and hence a careful study of the amoeba forcings is necessary. In the standard setting, amoeba forcings have been studied in \cite{Sp95} and \cite{L14}, where the authors have  presented  some applications to regularity properties and cardinal characteristics. In the generalized setting such a topic has not been suitably developed yet, and we aim at extending such an investigation.

\addcontentsline{toc}{section}{Bibliography}

\end{document}